\newcommand\NN{\mathbb{N}}
\newcommand\QQ{\mathbb{Q}}
\theoremstyle{plain}
\newtheorem{thm}{Theorem}[section]
\newtheorem{prop}[thm]{Proposition}
\newtheorem{lemma}[thm]{Lemma}
\newtheorem{conj}[thm]{Conjecture}
\newtheorem*{LI}{Lagrange inversion}
\begin{document}

\title{Cyclic Sieving Phenomenon in Non-crossing Connected Graphs}
\author{Alan Guo}
\address{Department of Mathematics\\Duke University\\Durham, NC 27708}
\email{alan.guo@duke.edu}

\begin{abstract}
We prove an instance of the cyclic sieving phenomenon in non-crossing
connected graphs, as conjectured by S.-P. Eu.
\end{abstract}

\date{July 25, 2010}

\maketitle

\section{Introduction}

A \emph{non-crossing graph} on a finite set $S$ is a graph with vertices
indexed by $S$ arranged in a circle such that no edges cross. When we
say a graph on $n$ vertices, we will mean $S = \{1,\ldots,n\}$.
In~\cite{flajoletnoy}, Flajolet and Noy showed that the number $c_{n,k}$
of non-crossing connected graphs (see Figure~\ref{f:graph}) on $n$ vertices
with $k$ edges, $n-1 \le k \le 2n-3$, is
\begin{equation}\label{eq:C}
c_{n,k} = \frac{1}{n-1} {3n-3 \choose n+k}{k-1 \choose n-2}.
\end{equation}
\begin{figure}[htbp]
\begin{center}
\input{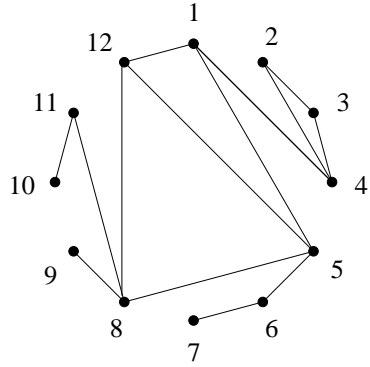}
\caption{A non-crossing connected graph on $12$ vertices with $14$ edges}
\label{f:graph}
\end{center}
\end{figure}

Define
$$
{n \brack k}_q = \frac{[n]!_q}{[k]!_q [n-k]!_q}
$$
where $[n]!_q = [n]_q [n-1]_q \cdots [1]_q$ and $[n]_q = 1 + q + \cdots + q^{n-1}
= \frac{1-q^n}{1-q}$.
The formula in \eqref{eq:C} admits a natural $q$-analogue:
\begin{equation}\label{eq:Cq}
c(n,k;q) = \frac{1}{[n-1]_q}{3n-3 \brack n+k}_q {k-1 \brack n-2}_q.
\end{equation}
It turns out that $c(n,k;q)$ is a polynomial in $q$, with nonnegative integer
coefficients; see Proposition~\ref{p:polynomial} below.

The main result of this paper is the following, which was conjectured by
S.-P. Eu~\cite{Eu}.
\begin{thm}\label{t:main}
Let $n \ge 1$ and $n-1 \le k \le 2n-3$, and let $X$ be the set of non-crossing
connected graphs on $n$ vertices with $k$ edges.
If $d \ge 1$ divides $n$ and $\omega$ is a primitive $d$-th root of
unity, then
$$
c(n,k;\omega) = s_d(n,k)
$$
where we define
$$
s_d(n,k) = \#\left\{ x \in X : \text{$x$ is fixed under rotation by $\frac{2\pi}{d}$}\right\}.
$$
\end{thm}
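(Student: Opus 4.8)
The plan is to prove the cyclic sieving phenomenon directly, by evaluating the polynomial $c(n,k;q)$ at $q=\omega$ and, independently, enumerating the rotation-invariant graphs counted by $s_d(n,k)$, then checking the two agree. The case $d=1$ is immediate, since $c(n,k;1)=c_{n,k}=|X|=s_1(n,k)$, so assume $d\ge 2$. By Proposition~\ref{p:polynomial}, $c(n,k;q)$ is a genuine polynomial, so $c(n,k;\omega)$ is well defined; moreover $d\mid n$ forces $d\nmid(n-1)$ when $d\ge2$, so the denominator does not vanish---indeed $\omega^{n}=1$ gives $[n-1]_\omega=(1-\omega^{-1})/(1-\omega)=-\omega^{-1}\ne0$---and I may substitute $q=\omega$ directly into \eqref{eq:Cq}. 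To evaluate the two Gaussian binomials I would invoke the $q$-Lucas theorem: for a primitive $d$-th root of unity $\omega$ and integers $a=d\alpha+r$, $b=d\beta+s$ with $0\le r,s<d$, one has ${a \brack b}_\omega = {\alpha \choose \beta}{r \brack s}_\omega$, where ${r \brack s}_\omega$ vanishes unless $s\le r$. Applying this to ${3n-3 \brack n+k}_\omega$ and ${k-1 \brack n-2}_\omega$, writing $m=n/d$ and tracking the residues of $n+k$ and $n-2$ modulo $d$, I expect $c(n,k;\omega)$ to collapse to an explicit product of ordinary binomial coefficients in $m$, $k$, and those residues, the surviving $q$-factors cancelling against the constant $-\omega^{-1}$. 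The output is a nonnegative integer, confirming $c(n,k;\omega)$ as a candidate fixed-point count.

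For the combinatorial side I would analyze the structure of a non-crossing connected graph $G$ fixed by the rotation $\rho$ of order $d$, which shifts the $n=dm$ boundary vertices by $m$ and permutes the chords and faces of the planar drawing. Since $G$ is connected and $\rho$ fixes the center of the disk, there is a distinguished $\rho$-invariant central feature---a diameter chord when $d=2$, or a $\rho$-invariant central face bounded by a symmetric cycle when $d\ge3$; cutting along a fundamental sector then exhibits $G$ as this central feature together with $d$ rotated copies of a non-crossing configuration built on one sector of roughly $m$ vertices. I would encode this decomposition as a functional equation for the generating function enumerating $\rho$-invariant graphs by the number of edges, and extract $s_d(n,k)$ by \emph{Lagrange inversion}, in parallel with the Flajolet--Noy derivation of \eqref{eq:C}.

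The main obstacle is precisely this fixed-point enumeration: correctly classifying the invariant central feature, verifying that the quotient of a symmetric non-crossing connected graph is again non-crossing and connected on the reduced vertex set, and carrying the Lagrange-inversion extraction through to a closed product formula. Once $s_d(n,k)$ is in hand as a product of ordinary binomials, the final comparison with the $q$-Lucas evaluation of $c(n,k;\omega)$ should reduce to reconciling the residue bookkeeping of the two computations---a routine, if delicate, factorial manipulation. I therefore expect most of the effort to lie in making the central-feature structure theorem precise and then matching it term-by-term against the evaluated polynomial.
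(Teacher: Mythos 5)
Your algebraic half is fine and matches the paper in substance: the paper also evaluates \eqref{eq:Cq} at $q=\omega$ by direct grouping of the $q$-number factors (equivalent to your $q$-Lucas bookkeeping), including the observation that $[n-1]_\omega\neq 0$ and the vanishing of $c(n,k;\omega)$ when $d\ge 3$ and $d\nmid k$. The genuine gap is in your fixed-point enumeration, and it begins with a structural claim that is false as stated: for $d=2$, a graph fixed by the half-turn contains a diameter if and only if $k$ is \emph{odd} (the diameter is the unique edge fixed by the rotation; all other edges pair up, so the parity of $k$ decides existence). When $k$ is even there is no diameter, the central feature is a centrally symmetric face (a $2m$-gon, $m\ge 2$), and this is precisely the hard case of the paper. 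There the sector pieces glued to the central polygon are not arbitrary non-crossing connected graphs on a reduced vertex set, but graphs on $\{v_i,\ldots,v_{i+1}\}$ required to contain the boundary edge from $v_i$ to $v_{i+1}$; these are counted by a sequence $f_{n,k}$ for which no closed form is available, only the recurrence $f_{n,k}+f_{n,k+1}=c_{n,k}+d_{n,k}$ through the two-component count $d_{n,k}$. Converting the decomposition into a generating-function identity further requires handling the cyclic overcount (each configuration arises from $n+v_1-v_m$ choices of the vertex set), which produces the equation $A/z=\bigl(\partial(F/z)/\partial z\bigr)/(1-F/z)$; the Lagrange-inversion extraction you hope for only succeeds after the nontrivial closed-form identity $1-F/z=\frac{1}{1+y}$ with $y$ as in \eqref{eq:y}, integrating to $\log(1+y)$. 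None of this is implied by your ``cut along a fundamental sector'' picture, and the paper in fact settles the $d=2$, $k$ odd case not by a closed-form count at all but by showing $c(2n-2,2k-1;-1)$ satisfies the same recurrence and base case as $s_2(2n-2,2k-1)$.

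The second gap is your claim that the quotient of a $\rho$-fixed graph is again a non-crossing connected graph on roughly $n/d$ vertices. For $d\ge 3$ this fails: edges may straddle sector boundaries (the paper's Figure~\ref{f:bijection} shows edges from a vertex $i_2$ to a vertex $j_3$), so restricting to one fundamental sector does not yield a well-defined graph on $n'$ vertices. The paper's resolution is a case split: either the edges form a central $d$-gon, in which case the graph is determined by its restriction to $\{1,\ldots,n'+1\}$ containing the edge from $1$ to $n'+1$, contributing $n'\cdot f_{n'+1,k'}$; or not, in which case there is a bijection onto centrally symmetric graphs on $2n'$ vertices with $2k'$ edges --- that is, $d\ge 3$ reduces to the already-solved $d=2$ case on $2n'$ vertices, not to a quotient on $n'$ vertices. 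So while your overall skeleton (root-of-unity evaluation, central-feature decomposition, Lagrange inversion) is the same as the paper's, the two structural claims on which your enumeration rests are incorrect as stated, and repairing them is exactly where the paper's main work lies.
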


In \cite{RSW}, Reiner, Stanton, and White introduced the notion of the
cyclic sieving phenomenon. A triple $(X,X(q),C)$ consisting of a finite set
$X$, a polynomial $X(q) \in \NN[q]$ satisfying $X(1) = |X|$,
and a cyclic group $C$ acting on $X$ exhibits the \emph{cyclic sieving
phenomenon} if, for every $c \in C$, if $\omega$ is a primitive root of unity
of the same multiplicative order as $c$, then
$$
X(\omega) = \#\{x \in X : c(x) = x \}.
$$
In \eqref{eq:C}, the two extreme cases, $k = n-1$ and $k = 2n-3$, correspond to
non-crossing spanning trees and $n$-gon triangulations respectively.
In the former case, Eu and Fu showed in \cite{EuFu} that quadrangulations
of a polygon exhibit the cyclic sieving phenomenon, where the cyclic
action is cyclic rotation of the polygon, and they showed a bijection
between quadrangulations of a $2n$-gon with non-crossing spanning
trees on $n$ vertices.
\begin{figure}[htbp]
\input{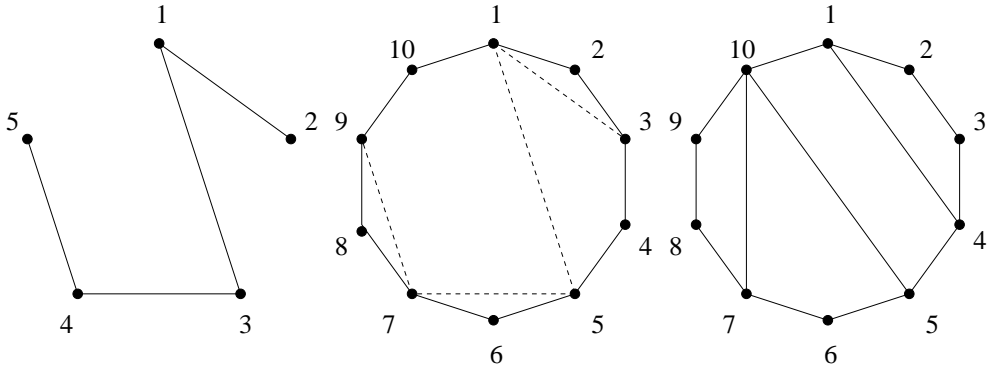}
\caption{Bijection between a spanning tree on a $5$ vertices and a
quadrangulation of a $10$-gon.}
\label{fig:eubijection}
\end{figure}
The bijection mapping is as follows: given a non-crossing spanning tree on
$n$ vertices, for each edge connecting $i$ to $j$, draw a dotted line from
$2i-1$ to $2j-1$ in a $2n$-gon. Then the quadrangulation of this $2n$-gon
is defined by quadrangles whose diagonals are the dotted lines; conversely,
given a $2n$-gon, every quadrangle has a diagonal whose endpoints are
odd numbers, so we may perform the reverse procedure to get an inverse
mapping (see Figure~\ref{fig:eubijection}). This bijection preserves the cyclic
sieving phenomenon, since rotation by $\frac{2\pi}{n}$ in the tree corresponds to
rotation by $\frac{\pi}{n}$ in the $2n$-gon.

In the latter case, Reiner, Stanton, and White showed
in \cite{RSW} that polygon dissections of a polygon exhibit the cyclic
sieving phenomenon where the cyclic action is also rotation. In particular,
triangulations acted upon by rotations exhibit the cyclic sieving phenomenon.
These results inspired Eu to conjecture Theorem~\ref{t:main}, which
we prove in the following sections.
The case $d = 1$ in Theorem~\ref{t:main} follows from \eqref{eq:C}.
We therefore consider the following three cases: $d = 2$ and $k$ is odd,
$d = 2$ and $k$ is even, and $d \ge 3$.

\section{Lagrange Inversion}

In the following sections, we will use the Lagrange-B\"urmann inversion
theorem to extract coefficients of certain generating functions. If
$\phi(z) \in \QQ[[z]]$, then we define $[z^n]\phi(z)$ to be the coefficient
of $z^n$ in $\phi(z)$.

\begin{LI}
Let $\phi(u) \in \QQ[[u]]$ be a formal power series with $\phi(0) \ne 0$, and let $y(z)
\in \QQ[[z]]$ satisfy $y = z\phi(y)$. Then, for an arbitrary series $\psi$, the coefficient of
$z^n$ in $\phi(y)$ is given by
$$
[z^n]\psi(y(z)) = \frac{1}{n} [u^{n-1}]\phi(u)^n\psi'(u).
$$
\end{LI}
Lagrange inversion may be applied to bivariate generating functions by
treating the second variable as a parameter.

We begin by illustrating how Flajolet and Noy used Lagrange inversion to
find \eqref{eq:C}. Let $C(z,w)$ be the generating function for $c_{n,k}$, that is,
$$
C(z,w) = \sum_{n,k} c_{n,k} z^nw^k.
$$
Then it can be shown using a combinatorial argument that $C$ satisfies
$$
wC^3 + wC^2 - z(1+2w)C + z^2(1+w) = 0.
$$
Setting $C = z + zy$, this becomes
$$
wz(1+y)^3 = y(1-wy)
$$
which can be put in the Lagrange form
\begin{equation}\label{eq:y}
y = z\frac{w(1+y)^3}{1-wy}.
\end{equation}
The result \eqref{eq:C} then follows upon application of Lagrange inversion
on $y$. We will in fact use this same function $y$ multiple times in our proofs.

\section{The case where $d=2$ and $k$ is odd}

In this section, we prove that Theorem~\ref{t:main} holds when $d = 2$
and $k$ is odd.
Recall that $d$ divides~$n$, so $n$ must be even in this case. The case where $n=2$
is trivial since there is only $1$ non-crossing connected graph on $2$ vertices,
so we may assume that $n > 2$. For this section, define $n' = \frac n2$ and
$k' = \frac{k+1}{2}$. It is a straightforward computation to verify that
\begin{equation}\label{eq:kodd}
c(n,k;-1) 
= {3n'-2 \choose n'+k'-1}{k'-1 \choose n'-1}.
\end{equation}
Define
$$
f_{n,k} = \#\{x \in X : \text{$x$ has an edge from $1$ to $n$}\}.
$$
\begin{lemma}
With $f_{n,k}$ defined as above, we have
$$
s_2(n,k) = n' \cdot f_{n' + 1, k'}.
$$
\end{lemma}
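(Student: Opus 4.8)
The plan is to build an explicit bijection that realizes the factor $n'$ as a choice of diameter and the factor $f_{n'+1,k'}$ as a choice of ``half'' graph. Write $\sigma$ for rotation by $\pi$, the map sending vertex $i$ to $i+n' \pmod n$; the graphs counted by $s_2(n,k)$ are exactly the $x \in X$ with $\sigma(x)=x$. First I would observe that $\sigma$ acts on the edge set of such an $x$, and an edge is fixed by $\sigma$ precisely when it joins an antipodal pair $\{i,i+n'\}$, i.e. is a diameter. Since two distinct diameters must cross at the center, a non-crossing graph contains at most one diameter, while the non-diameter edges of a $\sigma$-invariant graph come in $\sigma$-orbits of size two. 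As $k$ is odd, the number of diameters must be odd, hence exactly one. This already pins down the structure: every element counted by $s_2(n,k)$ has a unique diameter $\{i,i+n'\}$, and this diameter may be any of the $n'$ antipodal pairs, which will produce the factor $n'$.

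Next I would fix the diameter, say $\{i,i+n'\}$, and cut the disk along it. Because no edge crosses the diameter, every other edge lies in one of the two half-disks, and $\sigma$ interchanges the two halves. The restriction of $x$ to the arc $i, i+1, \ldots, i+n'$ is then a non-crossing graph on the $n'+1$ vertices of that arc which contains the edge joining its two endpoints; relabeling the arc as $1, 2, \ldots, n'+1$ sends this to a non-crossing graph on $n'+1$ vertices with an edge from $1$ to $n'+1$. Conversely, given such a half graph $H$, I would reconstruct $x$ by placing $H$ on the arc and $\sigma(H)$ on the complementary arc, identifying the shared diameter; this is manifestly $\sigma$-invariant and non-crossing. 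Counting edges, the $k-1$ non-diameter edges of $x$ split evenly between the halves, so $H$ carries $(k-1)/2$ non-diameter edges plus the diameter, for a total of $(k+1)/2 = k'$ edges, as required for $f_{n'+1,k'}$.

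The main obstacle is matching the connectivity conditions: I must show that $x$ is connected if and only if its half graph $H$ is connected, so that the bijection really lands in the sets counted by $s_2$ and $f$. The easy direction is that if $H$ is connected then $\sigma(H)$ is connected and the two share the diameter edge, forcing $x$ to be connected. For the converse I would take a path in $x$ from any vertex $v$ of the arc to the endpoint $i$ and \emph{fold} it: since edges never cross the diameter, every excursion of the path into the opposite half enters and leaves only through $i$ or $i+n'$, and each such excursion can be short-circuited using the diameter edge, producing a walk from $v$ to $i$ inside $H$. Hence connectivity of $x$ forces connectivity of $H$.

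Assembling these pieces, the map $x \mapsto (\text{diameter}, H)$ is a bijection from the set counted by $s_2(n,k)$ onto the $n'$ choices of diameter together with the set counted by $f_{n'+1,k'}$, which gives $s_2(n,k) = n' \cdot f_{n'+1,k'}$.
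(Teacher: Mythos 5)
Your proof is correct and follows essentially the same route as the paper: the parity and non-crossing arguments force a unique diameter (giving the factor $n'$), and cutting along it yields the bijection with graphs on $n'+1$ vertices containing the edge from $1$ to $n'+1$, each with $k' = \frac{k+1}{2}$ edges. You additionally verify the connectivity equivalence via the path-folding argument, a detail the paper's terse proof leaves implicit, but the underlying decomposition is identical.
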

\begin{proof}
Given a centrally symmetric with an odd number of edges, exactly
one of the edges must be a diameter. There are $n'$ choices
for the diameter. Once a diameter has been fixed, the remaining $k-1$ edges
are determined by the $k'-1$ edges on either side of the diameter.
\begin{figure}[htbp]
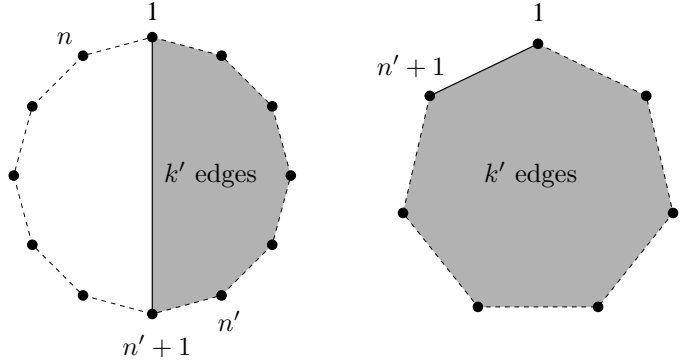

\input{graphkodd.pstex_t}
\ \ \ \ \ \ \ \
\input{graphkodd2.pstex_t}
\caption{The bijection between centrally symmetric $n$-vertex, $k$-edge
graph with fixed diameter and $(\frac{n}{2}+1)$-vertex, $\frac{k+1}{2}$-edge
graph with edge between $1$ and $\frac{n}{2}+1$.}
\label{f:graphkodd}
\end{figure}
Without loss of generality, assume the diameter has endpoints $1$ and $n'+1$.
Then we have a bijection (see Figure~\ref{f:graphkodd})
between the graphs we wish to count and graphs on $\left\{1,\ldots,n'+1\right\}$
with $k'$ edges including the edge from $1$ to $n'+1$. This
is counted by $f_{n'+1, k'}$.
\end{proof}

We define some more notation. Recall that $c_{n,k} = |X|$. Define $d_{n,k}$
to be the number of non-crossing graphs on $\{1,\ldots,n\}$ with $k$ edges and
exactly two connected components such that $1$ and $n$ are in different components.

\begin{lemma}
With $d_{n,k}$ defined above, we have
$$
d_{n,k} = \frac{2}{n-2}{3n-5 \choose n+k}{k-1 \choose n-3}.
$$
\end{lemma}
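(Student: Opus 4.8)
The plan is to reduce the count $d_{n,k}$ to a coefficient in the square of the generating function $C(z,w) = \sum_{n,k} c_{n,k} z^n w^k$ for non-crossing connected graphs, and then to read off that coefficient with Lagrange inversion exactly as Flajolet and Noy did for \eqref{eq:C}. Concretely, I claim that if $D(z,w) = \sum_{n,k} d_{n,k} z^n w^k$, then $D(z,w) = C(z,w)^2$. Granting this, since $C = z(1+y)$ with $y$ as in \eqref{eq:y}, we have $C^2 = z^2(1+y)^2$, so that $d_{n,k} = [z^n w^k]C^2 = [z^{n-2}w^k](1+y)^2$, which is a direct Lagrange-inversion computation.

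First I would establish $D = C^2$ combinatorially. The key structural fact is that the connected components of a non-crossing graph form a non-crossing partition of $\{1,\ldots,n\}$: if two components interleaved, a path inside one and a path inside the other would be forced to cross by a Jordan-curve argument. A non-crossing partition into exactly two blocks consists of two complementary cyclic arcs, so the two components of a graph counted by $d_{n,k}$ occupy complementary arcs; since $1$ and $n$ are cyclically adjacent and lie in different blocks, these arcs must be $\{1,\ldots,j\}$ and $\{j+1,\ldots,n\}$ for a unique $j$ with $1 \le j \le n-1$. The induced graph on each arc is a non-crossing connected graph, and after relabeling it is counted by $c_{j,a}$ and $c_{n-j,b}$ respectively, with $a+b=k$. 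Summing over $j$ and over the edge split gives $d_{n,k} = \sum_{j=1}^{n-1}\sum_{a+b=k} c_{j,a}c_{n-j,b}$, which is exactly $[z^n w^k]C^2$.

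It then remains to extract $[z^{n-2}w^k](1+y)^2$. Applying Lagrange inversion with $\phi(u) = \frac{w(1+u)^3}{1-wu}$, $\psi(u) = (1+u)^2$, and $\psi'(u) = 2(1+u)$, I obtain
$$
[z^{n-2}](1+y)^2 = \frac{2w^{n-2}}{n-2}[u^{n-3}]\frac{(1+u)^{3n-5}}{(1-wu)^{n-2}}.
$$
Expanding $(1+u)^{3n-5} = \sum_j \binom{3n-5}{j} u^j$ and $(1-wu)^{-(n-2)} = \sum_i \binom{n-3+i}{i}(wu)^i$, the conditions $i+j=n-3$ (from $[u^{n-3}]$) and $n-2+i=k$ (from $[w^k]$) force $i = k-n+2$ and $j = 2n-5-k$, yielding $\frac{2}{n-2}\binom{n-3+i}{i}\binom{3n-5}{j}$. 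Rewriting these via $\binom{a}{b} = \binom{a}{a-b}$ as $\binom{k-1}{n-3}$ and $\binom{3n-5}{n+k}$ gives the claimed formula. I expect the only real obstacle to be the structural step: carefully verifying that having exactly two components forces complementary contiguous arcs, and that the arc decomposition is a genuine bijection (including boundary cases such as a single-vertex component, which corresponds to the $c_{1,0}=1$ term of $C$). The coefficient extraction is routine once $D = C^2$ is in hand, the one delicate point being the matching of the binomial indices.
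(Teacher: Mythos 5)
Your proposal is correct and follows essentially the same route as the paper: establish $D = C^2$, write $C = z(1+y)$ with $y$ as in \eqref{eq:y}, and extract the coefficient by Lagrange inversion, with your single application to $\psi(u) = (1+u)^2$ merely streamlining the paper's two separate extractions of $[z^{n-2}w^k]y$ and $[z^{n-2}w^k]y^2$ followed by Pascal's identity. Your careful arc-decomposition argument for $D = C^2$ (complementary cyclic arcs split at a unique $j$, with $1$ and $n$ cyclically adjacent in different blocks) is in fact more detailed than the paper, which asserts this identity in one sentence.
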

\begin{proof}
Let $D(z,w) = \sum d_{n,k}z^nw^k$ and let $C(z,w) = \sum c_{n,k}z^nw^k$.
Since $d_{n,k}$ counts graphs with two connected components, which are
each counted by $c_{n,k}$, we therefore have $D = C^2$. To find the coefficient
of $z^nw^k$, we use Lagrange inversion. Recall from \eqref{eq:y} that
$y = z \frac{w(1+y)^3}{1-wy}$. But
$D = C^2 = z^2 + z^2(y^2 + 2y)$. Therefore
$$
[z^nw^k]D = [z^{n-2}w^k] y^2 + 2[z^{n-2}w^k]y.
$$
Computing each of these separately, we have
\begin{eqnarray*}
[z^{n-2}w^k]y &=& \frac{1}{n-2}[u^{n-3}w^k] \frac{w^{n-2}(1+u)^{3n-6}}{(1-uw)^{n-2}} \\
&=& \frac{1}{n-2}[u^{n-2}w^{k-n+2}] \frac{(1+u)^{3n-6}}{(1-uw)^{n-2}} \\
&=& \frac{1}{n-2} (-1)^{k-n+2} {2-n \choose k-n+2} [u^{n-3}] u^{k-n+2} (1+u)^{3n-6} \\
&=& \frac{1}{n-2} {k-1 \choose k-n+2}[u^{n-3}] u^{k-n+2}(1+u)^{3n-6} \\
&=& \frac{1}{n-2} {k-1 \choose n-3} [u^{2n-k-5}] (1+u)^{3n-6} \\
&=& \frac{1}{n-2} {3n-6 \choose n+k-1} {k-1 \choose n-3}
\end{eqnarray*}
and
\begin{eqnarray*}
[z^{n-2}w^k]y^2 &=& \frac{2}{n-2} [u^{n-4}w^k] \frac{w^{n-2}(1+u)^{3n-6}}{(1-uw)^{n-2}} \\
&=& \frac{2}{n-2} [u^{n-4}w^{k-n+2}] \frac{(1+u)^{3n-6}}{(1-uw)^{n-2}} \\
&=& \frac{2}{n-2} (-1)^{k-n+2} {2-n \choose k-n+2} [u^{n-4}] u^{k-n+2} (1+u)^{3n-6} \\
&=& \frac{2}{n-2} {k-1 \choose k-n+2} [u^{2n-k-6}] (1+u)^{3n-6} \\
&=& \frac{2}{n-2} {3n-6 \choose n+k} {k-1 \choose n-3}.
\end{eqnarray*}
where we have used the identity
$$
{n \choose k} = (-1)^k{k-n-1 \choose k}.
$$
The result then follows from Pascal's identity.
\end{proof}

\begin{lemma}
The sequence $f_{n,k}$ satisfies the recurrence
$$
f_{n,k} + f_{n,k+1} = c_{n,k} + d_{n,k}
$$
with the base case
$$
f_{n,2n-3} = c_{n,2n-3} = \frac{1}{n-1}{2n-4 \choose n-2}.
$$
\end{lemma}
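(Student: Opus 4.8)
The plan is to prove both parts combinatorially through a single bijection centered on the boundary edge $\{1,n\}$. Because the vertices $1,\ldots,n$ lie on a circle, $n$ and $1$ are adjacent, so the edge $\{1,n\}$ crosses no other edge; adjoining or deleting it always preserves the non-crossing property. This is the structural feature that makes $\{1,n\}$ easy to manipulate, and it underlies everything below.

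For the base case I would observe that $k=2n-3$ is the maximum possible number of edges of a non-crossing graph on $n$ vertices, attained exactly by the triangulations of the $n$-gon, each of which has $2n-3$ edges and is connected. Every triangulation contains all $n$ boundary edges, in particular $\{1,n\}$, so every graph counted by $c_{n,2n-3}$ already has the edge $\{1,n\}$; hence $f_{n,2n-3}=c_{n,2n-3}$. Evaluating \eqref{eq:C} at $k=2n-3$ collapses the first factor to $\binom{3n-3}{3n-3}=1$ and leaves $\frac{1}{n-1}\binom{2n-4}{n-2}$, which finishes this part.

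For the recurrence I would analyze deletion of $\{1,n\}$ from a graph counted by $f_{n,k+1}$, i.e. a connected non-crossing graph on $n$ vertices with $k+1$ edges containing $\{1,n\}$. The result is a non-crossing graph with $k$ edges, which is either connected or not. If it remains connected, it is a connected $k$-edge graph \emph{lacking} $\{1,n\}$, and such graphs are counted by $c_{n,k}-f_{n,k}$. If it disconnects, then $\{1,n\}$ was a bridge, and these fall under $d_{n,k}$. Conversely, adjoining $\{1,n\}$ gives a two-sided inverse: to a connected $k$-edge graph without $\{1,n\}$ it produces a graph in $f_{n,k+1}$ whose deletion stays connected, and to a two-component graph with $1,n$ separated it reconnects the two pieces (the new edge being their only link) to produce a graph in $f_{n,k+1}$ whose deletion disconnects. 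Since these constructions invert deletion, $f_{n,k+1}=(c_{n,k}-f_{n,k})+d_{n,k}$, which rearranges to the stated identity.

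The step requiring the most care is matching the disconnected case to $d_{n,k}$ exactly. I must confirm that when deletion disconnects the graph it yields precisely two components, with $1$ and $n$ in different ones. This holds because deleting a single edge from a connected graph produces at most two components, and a bridge $\{1,n\}$ splits the graph into exactly the set of vertices reachable from $1$ and the set reachable from $n$ after deletion; these contain $1$ and $n$ respectively and cannot coincide. Thus the disconnected case is counted precisely by $d_{n,k}$ and never by a graph with $1$ and $n$ in the same component. Beyond verifying this dichotomy, the argument is a clean casework on whether $\{1,n\}$ is a bridge, and no generating-function computation is needed.
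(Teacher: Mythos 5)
Your proof is correct and follows essentially the same route as the paper: the base case via the observation that every triangulation contains the boundary edge $\{1,n\}$, and the recurrence via deleting that edge and splitting into the cases where the remainder is connected (counted by $c_{n,k}-f_{n,k}$) or has two components separating $1$ and $n$ (counted by $d_{n,k}$). Your added verification that deletion of the bridge yields exactly two components with $1$ and $n$ separated, and that adjoining the edge is a two-sided inverse, only makes explicit what the paper leaves implicit.
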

\begin{proof}
The base case follows from the fact that every triangulation must contain the
edge from $1$ to $n$. Now consider a non-crossing connected graph with $k+1$ edges
on $\{1,\ldots,n\}$ with the edge $1$ to $n$. We have two cases. When we remove
this edge, either the remaining graph is connected or not. If the remaining graph
is connected, then we have a non-crossing connected graph with $k$ edges
without the edge from $1$ to $n$. This is counted by $c_{n,k} - f_{n,k}$.
If the remaining graph is not connected, then there are exactly two connected
components, and $1$ and $n$ lie in separate components. This is
counted by $d_{n,k}$. Hence
$$
f_{n,k+1} = c_{n,k} + d_{n,k} - f_{n,k}.
$$
\end{proof}

As a corollary to this lemma, it follows that one has the recurrence
\begin{equation}\label{eq:recurrence}
s_2(2n-2,2k-1) + s_2(2n-2,2k+1) = (n-1)c_{n,k} + (n-1)d_{n,k}
\end{equation}
with base case
$$
s_2(2n-2,4n-7) = (n-1)c_{n,2n-3} = {2n-4 \choose n-2}.
$$
To show that $c(n,k;-1) = s_2(n,k)$ for even $n$ and odd $k$ or, equivalently,
$c(2n-2,2k-1;-1) = s_2(2n-2,2k-1)$ for any positive integers $n>2$ and
$n-1 \le k \le 2n-3$, it suffices to show that $c(2n-2,2k-1;-1)$ satisfies the same
recurrence \eqref{eq:recurrence} as $s_2(2n-2,2k-1)$. The base case is immediate:
$$
c(2n-2,4n-7;-1) = {3n-5 \choose 3n-5}{2n-4 \choose n-2} = {2n-4 \choose n-2}.
$$
We now show that $c(2n-2,2k-1;-1)$ satisfies the recurrence relation as well,
which completes the proof that the theorem holds for $d = 2$ and odd $k$.
\begin{prop}
$c(2n-2,2k-1;-1)$ satisfies
$$
c(2n-2,2k-1;-1) + c(2n-2,2k+1;-1) = (n-1)c_{n,k} + (n-1)d_{n,k}.
$$
\end{prop}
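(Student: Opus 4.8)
The plan is to reduce the claimed recurrence to a single binomial coefficient identity by substituting the explicit closed forms for all four quantities that appear, and then to verify that identity by repeated application of Pascal's rule together with the ratio relation between adjacent binomial coefficients.

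First I would evaluate the two terms on the left using \eqref{eq:kodd}. Reading that formula with ``$n$'' $= 2n-2$ (so $n' = n-1$) and ``$k$'' $= 2k-1$, $2k+1$ (so $k' = k$ and $k' = k+1$ respectively) gives
$$c(2n-2,2k-1;-1) = {3n-5 \choose n+k-2}{k-1 \choose n-2}, \qquad c(2n-2,2k+1;-1) = {3n-5 \choose n+k-1}{k \choose n-2}.$$
On the right, \eqref{eq:C} and the formula for $d_{n,k}$ give $(n-1)c_{n,k} = {3n-3 \choose n+k}{k-1 \choose n-2}$ and $(n-1)d_{n,k} = \frac{2(n-1)}{n-2}{3n-5 \choose n+k}{k-1 \choose n-3}$; note that the prefactors $\frac{1}{n-1}$ and $\frac{2}{n-2}$ are exactly what make these clean after multiplying through by $n-1$. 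Thus the proposition becomes a purely combinatorial identity in $n$ and $k$.

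Next I would simplify both sides. On the left, expanding ${k \choose n-2} = {k-1 \choose n-2} + {k-1 \choose n-3}$ by Pascal and combining ${3n-5 \choose n+k-2} + {3n-5 \choose n+k-1} = {3n-4 \choose n+k-1}$ collects the left-hand side into ${3n-4 \choose n+k-1}{k-1 \choose n-2} + {3n-5 \choose n+k-1}{k-1 \choose n-3}$. On the right, expanding ${3n-3 \choose n+k}$ and ${3n-4 \choose n+k}$ by Pascal produces a term ${3n-4 \choose n+k-1}{k-1 \choose n-2}$ that cancels against the left. Using ${k-1 \choose n-2} = \frac{k-n+2}{n-2}{k-1 \choose n-3}$ to factor out the common ${k-1 \choose n-3}$ and clearing the denominator then reduces everything to
$$(n-2){3n-5 \choose n+k-1} - (k-n+2){3n-4 \choose n+k} = 2(n-1){3n-5 \choose n+k}.$$

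Finally, substituting ${3n-4 \choose n+k} = {3n-5 \choose n+k} + {3n-5 \choose n+k-1}$ and collecting coefficients, the whole identity collapses to the single adjacent-binomial relation
$$(2n-4-k){3n-5 \choose n+k-1} = (n+k){3n-5 \choose n+k},$$
which is nothing more than ${3n-5 \choose n+k} = {3n-5 \choose n+k-1}\cdot\frac{(3n-5)-(n+k-1)}{n+k}$. The only real obstacle is bookkeeping: keeping the several Pascal expansions and cancellations straight, and checking that the division by ${k-1 \choose n-3}$ is legitimate, i.e.\ that ${k-1 \choose n-3} \ne 0$; this holds throughout the relevant range $n-1 \le k \le 2n-3$ with $n > 2$, since then $k-1 \ge n-2 > n-3 \ge 0$. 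Alternatively one can avoid the division entirely by clearing all denominators at the outset and verifying the resulting polynomial identity directly.
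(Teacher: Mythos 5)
Your proposal is correct and follows the paper's proof exactly: substitute the closed forms from \eqref{eq:kodd}, \eqref{eq:C}, and the formula for $d_{n,k}$ to reduce the recurrence to the binomial identity
$${3n-5 \choose n+k-2}{k-1 \choose n-2} + {3n-5 \choose n+k-1}{k \choose n-2} = {3n-3 \choose n+k}{k-1 \choose n-2} + \frac{2n-2}{n-2}{3n-5 \choose n+k}{k-1 \choose n-3}.$$
The only difference is that the paper leaves this identity as ``a straightforward exercise for the reader,'' whereas you actually carry out the exercise (correctly, including the check that dividing by ${k-1 \choose n-3}$ is legitimate in the range $n-1 \le k \le 2n-3$, $n>2$).
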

\begin{proof}
From \eqref{eq:kodd}, we see that all we need to verify is
$$
{3n-5 \choose n+k-2}{k-1 \choose n-2} + {3n-5 \choose n+k-1}{k \choose n-2}
= {3n-3 \choose n+k}{k-1 \choose n-2} + \frac{2n-2}{n-2}{3n-5 \choose n+k}
{k-1 \choose n-3},
$$
which we leave as a straightforward exercise for the reader.
\end{proof}

\section{The case where $d=2$ and $k$ is even}

In this section, we prove that Theorem~\ref{t:main} holds when $d = 2$
and $k$ is even.
As in the previous case, it is again a straightforward computation to verify
that
$$
c(n,k;-1) = {\frac{3n-4}{2} \choose \frac{n+k}{2}}{\frac{k-2}{2} \choose \frac{n-2}{2}}.
$$

Let $a_{n,k}$ denote the number of non-crossing connected graphs with
$2n$ vertices and $k$ pairs of antipodal edges, where a diameter counts
as one pair. Then
$$
a_{n,k} = s_2(2n,2k-1) + s_2(2n,2k) = c(2n,2k-1;-1) + s_2(2n,2k).
$$
Therefore, to show Theorem~\ref{t:main} holds for $d=2$ and even $k$,
it suffices to show that
\begin{equation}\label{eq:keven}
a_{n,k} = c(2n,2k-1;-1) + c(2n,2k;-1) = {3n-1 \choose n+k}{k-1 \choose n-1}.
\end{equation}
Let $F$ be the generating function for $f_{n,k}$, i.e. $F(z,w) = \sum f_{n,k}z^nw^k$.
Similarly, let $A$ be the generating function for $a_{n,k}$.

\begin{lemma}
$$
a_{n,k} = \sum_{m=1}^n \  \sum_{k_1 + \cdots + k_m = k} \ 
\sum_{1 \le v_1 < \cdots < v_m \le n} \  \prod_{i=1}^m f_{v_{i+1} - v_i +1,k_i}
$$
where $v_{m+1} = v_1 + n$.
\end{lemma}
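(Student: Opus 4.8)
The plan is to decompose a non-crossing connected graph on $2n$ vertices with centrally symmetric edge structure according to the configuration of its diameters and antipodal edge-pairs. The key observation is that the $k$ antipodal edge-pairs partition the circle of $2n$ vertices into regions, and a fixed vertex set $1 \le v_1 < \cdots < v_m \le 2n'$ (here indexed in the ``upper half'' of the symmetric picture) records where the diameters or the relevant edges meet the boundary, so that the graph decomposes as a collection of independent non-crossing connected blocks. First I would set up the central symmetry carefully: because the graph is fixed under the antipodal map, every edge comes either as a diameter (a single self-antipodal edge) or as a genuine antipodal pair, and counting \emph{pairs} (with a diameter counting once) is exactly the statistic $a_{n,k}$. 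The quantity $s_2(2n,2k-1) + s_2(2n,2k) = a_{n,k}$ already recorded in the excerpt confirms that $a_{n,k}$ lumps together the odd case (one diameter present) and the even case (no diameter).

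Next I would identify the combinatorial role of the $f$'s. Recall $f_{v,k}$ counts non-crossing connected graphs on $v$ vertices with $k$ edges that contain the edge from $1$ to $v$; this ``boundary edge'' is precisely what is forced when a block is cut out between two consecutive distinguished vertices $v_i$ and $v_{i+1}$ on the circle. The heart of the argument is a bijection: given a centrally symmetric connected graph, I would read off the vertices $v_1 < \cdots < v_m$ that are the ``anchors'' of the antipodal structure (the endpoints on one semicircle of the edges that are forced to persist between consecutive blocks), with the cyclic convention $v_{m+1} = v_1 + n$ that wraps around through the antipodal identification. Each arc from $v_i$ to $v_{i+1}$ spans $v_{i+1}-v_i+1$ vertices (the two endpoints included) and carries an independent non-crossing connected graph containing the boundary edge between its two extreme vertices, hence is counted by $f_{v_{i+1}-v_i+1,k_i}$; summing the $k_i$ to $k$ accounts for all $k$ pairs, and summing over $m$ allows any number of anchor points. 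Multiplying the block counts and summing over the compositions $k_1+\cdots+k_m=k$ and over the choices of anchors then yields exactly the stated triple sum.

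The main obstacle, and the step I would treat most carefully, is establishing that this decomposition is genuinely a bijection rather than merely an inequality. In one direction I must check that every centrally symmetric connected graph decomposes uniquely, which requires showing that the anchor vertices and the partition into blocks are canonically determined and that central symmetry forces the upper-half data to determine the lower half completely (so that counting only the $v_i \le n$ on one semicircle, together with the wrap-around $v_{m+1}=v_1+n$, loses no information and introduces no overcounting). In the reverse direction I must verify that assembling independent boundary-edge-containing graphs on the arcs, then reflecting antipodally, always produces a \emph{connected} non-crossing graph: connectivity across blocks follows because consecutive blocks share an anchor vertex and the shared boundary edges chain the blocks together around the circle, while non-crossing is preserved because distinct arcs occupy disjoint angular sectors. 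I would also need to confirm the edge bookkeeping, namely that the total number of antipodal pairs equals $\sum_i k_i = k$ with a diameter contributing a single pair; this is where the convention that a diameter ``counts as one pair'' must be reconciled with the per-block counts $f_{v_{i+1}-v_i+1,k_i}$, and getting that accounting exactly right is the crux of the proof.
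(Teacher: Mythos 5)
Your overall architecture---cut the centrally symmetric graph along distinguished vertices $v_1 < \cdots < v_m$ into arcs, count the graph on each arc by $f_{v_{i+1}-v_i+1,k_i}$ because its boundary edge is forced, and sum over compositions $k_1+\cdots+k_m=k$---is exactly the shape of the paper's argument, but your proposal is missing the one idea that makes it a proof: a concrete, canonical definition of the $v_i$. You define the anchors as ``endpoints of the edges that are forced to persist between consecutive blocks,'' but the blocks are in turn defined by the anchors, so the definition is circular, and the uniqueness of the decomposition---which you yourself correctly identify as the crux---is never actually established. The paper resolves this by looking at the region of the non-crossing graph containing the center of the $2n$-gon: there is a unique $2m$-gon formed by edges of the graph that contains the center and has no other edges inside it; by the antipodal symmetry its vertex set is invariant under rotation by $\pi$, so exactly $m$ of its vertices lie in $\{1,\ldots,n\}$, giving $v_1 < \cdots < v_m$ with $v_{m+1} = v_1 + n$, and all remaining edges lie outside this polygon. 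This single observation supplies everything you flagged as delicate: the decomposition is canonically determined, the side from $v_i$ to $v_{i+1}$ of the central polygon is precisely the forced boundary edge making each arc's graph one of the $f_{v_{i+1}-v_i+1,k_i}$ objects, and the diameter bookkeeping you called ``the crux'' becomes automatic---for $m \ge 2$ no edge can pass through the center (it would have to enter the central face), while $m = 1$ degenerates to a $2$-gon that is exactly the diameter, counted once among the $k$ edges of the single block counted by $f_{n+1,k}$.

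Without this central-face idea (or some equivalent canonical rule), your forward map from graphs to tuples $(m;\,v_1,\ldots,v_m;\,k_1,\ldots,k_m)$ is not well defined, and nothing in your argument rules out a graph admitting several plausible anchor sets, i.e.\ overcounting. The reassembly direction of your plan is fine and matches the paper's implicit reasoning: consecutive blocks share endpoints and their boundary edges chain into the central polygon, giving connectivity, and disjoint angular sectors preserve non-crossing. The genuine gap is entirely in the decomposition direction, and it is filled by identifying the $v_i$ as the vertices of the unique innermost antipodally symmetric $2m$-gon containing the center.
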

\begin{proof}
Consider a non-crossing connected graph with $2n$ vertices and $k$ pairs
of antipodal edges. There exists a unique positive integer $m$ such that the
center of the $2n$-gon lies inside a $2m$-gon formed by edges of the graph
and such that no other edges lie inside the $2m$-gon. This $m$ is at most
$n$. Now, exactly $m$ of the vertices of this $2m$-gon, call them
$v_1 < \cdots < v_m$, lie in the set $\{1,\ldots,n\}$ due to the antipodal condition
on the edges.
\begin{figure}[htbp]
\input{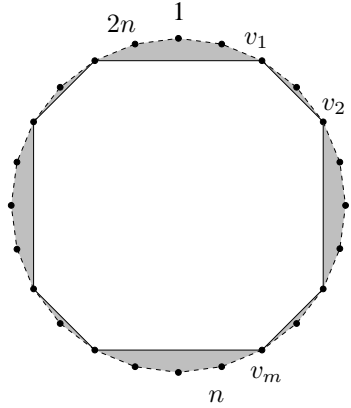}
\caption{A graph with an inner $2m$-gon, where $m = 4$.}
\label{f:graphkeven}
\end{figure}
All edges not used in the $2m$-gon lie outside of it (see Figure~\ref{f:graphkeven}).
The $(m+1)$-th vertex
is antipodal to $v_1$, hence $v_{m+1} = v_1 + n$. For each $i$, there is an edge
from $v_i$ to $v_{i+1}$ and $k_i-1$ other edges on the vertices
$\{v_i, v_i + 1, \ldots, v_{i+1}\}$, such that $k_1 + \cdots + k_m = k$. Such
a graph is counted by $f_{v_{i+1} - v_i+1,k_i}$. Thus we get the corresponding
sum.
\end{proof}

\begin{lemma}
With $A$ and $F$ as defined above, we have
$$
\frac{A}{z} = \frac{\partial(F/z) / \partial z}{1 - F/z}.
$$
\end{lemma}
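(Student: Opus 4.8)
The plan is to reorganize the triple sum in the preceding lemma into a sum over \emph{compositions} of $n$, so that it factors into a product over independent blocks and the generating-function identity emerges as a geometric series, with the distinguished block producing a derivative.

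First I would introduce the gap variables $g_i = v_{i+1} - v_i$ for $1 \le i \le m$ (where $v_{m+1} = v_1 + n$). Since $1 \le v_1 < \cdots < v_m \le n$, each $g_i \ge 1$ and $g_1 + \cdots + g_m = v_{m+1} - v_1 = n$, so $(g_1,\ldots,g_m)$ is a composition of $n$ into $m$ positive parts. The key combinatorial step is to count, for a fixed composition, how many tuples $(v_1,\ldots,v_m)$ produce it. Given the $g_i$ and a choice of $v_1$, the remaining vertices are forced by $v_i = v_1 + g_1 + \cdots + g_{i-1}$; the constraints $v_1 \ge 1$ and $v_m = v_1 + (n - g_m) \le n$ are together equivalent to $1 \le v_1 \le g_m$. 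Hence each composition is realized by exactly $g_m$ admissible tuples, while the inner product $\prod_i f_{v_{i+1}-v_i+1,k_i} = \prod_i f_{g_i+1,k_i}$ depends only on the composition. This rewrites the count as
$$
a_{n,k} = \sum_{m \ge 1} \ \sum_{\substack{g_1 + \cdots + g_m = n \\ g_i \ge 1}} \ \sum_{k_1 + \cdots + k_m = k} g_m \prod_{i=1}^m f_{g_i+1,k_i}.
$$
Next I would pass to generating functions. Writing $\Phi = F/z = \sum_{g \ge 1,\,k} f_{g+1,k}\,z^g w^k$ for the generating function of a single block indexed by its gap $g$, the blocks $1,\ldots,m-1$ contribute $\Phi^{m-1}$, while the distinguished last block carries the extra weight $g_m$ and therefore contributes $\sum_{g,k} g\,f_{g+1,k}\,z^g w^k = z\,\partial\Phi/\partial z$. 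Because the parts are otherwise independent, summing the geometric series $\sum_{m \ge 1}\Phi^{m-1} = (1-\Phi)^{-1}$ gives
$$
A = \sum_{m \ge 1} \Phi^{m-1}\cdot z\,\frac{\partial \Phi}{\partial z} = z\,\frac{\partial \Phi/\partial z}{1 - \Phi}.
$$
Dividing by $z$ and substituting $\Phi = F/z$ yields the claimed identity.

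The main obstacle is the counting step: one must verify that singling out the last gap $g_m$, rather than symmetrizing over all parts, accounts for every admissible tuple exactly once. Concretely, the map sending $(v_1,\ldots,v_m)$ to the pair $\bigl((g_1,\ldots,g_m),\,v_1\bigr)$ is a bijection onto pairs satisfying $1 \le v_1 \le g_m$, and checking this — in particular that the bound $v_1 \le g_m$ is precisely the condition $v_m \le n$ — is the one point requiring care. Everything afterward is the standard translation of a ``cut necklace'' decomposition into a rational generating function, where cutting at the block containing the arc back to $v_1$ is exactly what converts the extra factor $g_m$ into the derivative $z\,\partial\Phi/\partial z$.
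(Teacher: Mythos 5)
Your proposal is correct and follows essentially the same route as the paper: the paper likewise reduces to gap vectors, counts each translation class with multiplicity $n + v_1 - v_m$ (which is exactly your $g_m$, written there as $n_m - 1$ attached to the last block), and then sums the resulting series as $\partial(F/z)/\partial z$ times the geometric series $\bigl(1 - F/z\bigr)^{-1}$. Your bijective phrasing of the multiplicity count via $1 \le v_1 \le g_m$ is a slightly cleaner packaging of the paper's translation argument, but the substance is identical.
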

\begin{proof}
We show that
$$
a_{n,k} = \sum_{m=1}^n \  \sum_{k_1 + \cdots k_m = k} \ 
\sum_{n_1 + \cdots + n_m = n+m}(n_m-1)f_{n_m,k_m} \prod_{i=1}^{m-1} f_{n_i,k_i}.
$$
In the sum in the previous lemma, the term $\prod_{i=1}^m f_{v_{i+1} - v_i+1,k_i}$
is counted multiple times with the product written in this order. We show that it is
counted exactly $n + v_1 - v_m$ times. Consider any $m$-element subset
$\{v_1,\ldots,v_m\} \subseteq \{1,\ldots,n\}$ with $v_1 < \cdots < v_m$. For $j =1,\ldots,v_1-1$, this subset yields the same summand as $\{v_1 - j, \ldots, v_m - j\}$.
Therefore, we can identify any subset $\{v_1,\ldots,v_m\}$ with
$\{1,\ldots,v_m - v_1 + 1\}$. There are exactly $n + v_1 - v_m$ subsets
corresponding to this one, each with largest element $v_m - v_1 + 1, v_m - v_1 + 2,
\ldots, n$. This proves the sum identity above.

For the equality of generating functions, we insert variables into the above identity:
\begin{equation}\label{eq:triplesum}
a_{n,k}z^nw^k = \frac{1}{z^{m-2}} \sum_{m=1}^n \ \sum_{k_1 + \cdots k_m = k} \
\sum_{n_1 + \cdots + n_m = n+m} (n_m-1)f_{n_m,k_m}z^{n_m-2}w^{k_m}
\prod_{i=1}^{m-1} f_{n_i,k_i}z^{n_i}w^{k_i}.
\end{equation}
We note that
$$
\frac{\partial (F/z)}{\partial z} = \sum_{n,k} (n-1) f_{n,k} z^{n-2}w^k
$$
so, summing over all $n$ and $k$ in \eqref{eq:triplesum}, we get
$$
A = \frac{\partial (F/z)}{\partial z} \left( z + F + \frac{F^2}{z} + \frac{F^3}{z^2} + \cdots \right)
= z \frac{\partial (F/z)}{\partial z} \left( \frac{1}{1 - F/z} \right).
$$
\end{proof}

\begin{prop}
$$
a_{n,k} = {3n-1 \choose n+k}{k-1 \choose n-1}.
$$
\end{prop}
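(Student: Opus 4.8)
The plan is to obtain a closed form for the generating function $F$ and then push it through the relation $A/z = \frac{\partial(F/z)/\partial z}{1-F/z}$ proved in the preceding lemma. First I would turn the recurrence $f_{n,k}+f_{n,k+1} = c_{n,k}+d_{n,k}$ into a generating-function identity. Multiplying by $z^nw^{k+1}$ and summing over $n\ge 2$ and all $k$ gives $wF$ and $F$ on the left; on the right one gets $w(C+D-z)$, the lone subtraction of $z$ coming from the single-vertex term $c_{1,0}$, which is the one place the recurrence fails (one should verify that at the lower boundary $k=n-2$ the recurrence does hold, via $f_{n,n-1}=d_{n,n-2}$, so that no other correction terms appear). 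This yields $(1+w)F = w(C+D-z)$, and substituting $C = z(1+y)$ and $D = C^2 = z^2(1+y)^2$ produces
$$F = \frac{w}{1+w}\bigl(zy + z^2(1+y)^2\bigr).$$

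The crucial step is to simplify $F/z$ using the Lagrange relation $z = y/\phi(y) = \frac{y(1-wy)}{w(1+y)^3}$ coming from \eqref{eq:y}. Substituting $z(1+y)^2 = \frac{y(1-wy)}{w(1+y)}$ and combining over a common denominator, the factors of $w$ and of $(1+w)$ cancel and everything collapses to the strikingly simple
$$\frac{F}{z} = \frac{y}{1+y}.$$
I expect this cancellation to be the main obstacle, not because the algebra is hard but because one must discover that $F$ has this clean form; after that the rest is mechanical. (A quick sanity check against $f_{2,1}=1$, $f_{3,2}=2$, $f_{3,3}=1$ matches the low-order expansion of $y/(1+y)$.)

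With $F/z = y/(1+y)$ one has $1-F/z = 1/(1+y)$, so the lemma's formula rewrites as
$$\frac{A}{z} = \frac{\partial(F/z)/\partial z}{1-F/z} = \frac{\partial}{\partial z}\bigl(-\log(1-F/z)\bigr) = \frac{\partial}{\partial z}\log(1+y).$$
Since $[z^n]\,z\,H'(z) = n\,[z^n]H(z)$, extracting coefficients gives $a_{n,k} = n\,[z^nw^k]\log(1+y)$, reducing the whole proposition to one Lagrange inversion.

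Finally I would apply Lagrange inversion with $\psi(u)=\log(1+u)$, so $\psi'(u)=1/(1+u)$ and $\phi(u)^n = w^n(1+u)^{3n}/(1-wu)^n$, giving
$$a_{n,k} = n\cdot\frac1n[u^{n-1}w^k]\,\phi(u)^n\frac{1}{1+u} = [u^{n-1}w^{k-n}]\,\frac{(1+u)^{3n-1}}{(1-wu)^n}.$$
Expanding $(1-wu)^{-n}=\sum_j\binom{n-1+j}{j}(wu)^j$ selects $j=k-n$ with coefficient $\binom{k-1}{n-1}$, after which $[u^{2n-1-k}](1+u)^{3n-1}=\binom{3n-1}{2n-1-k}=\binom{3n-1}{n+k}$, assembling into $\binom{3n-1}{n+k}\binom{k-1}{n-1}$ as claimed. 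This last extraction is of exactly the same type as those already performed for $c_{n,k}$ and $d_{n,k}$, so it is entirely routine.
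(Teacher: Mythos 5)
Your proposal is correct and takes essentially the same route as the paper: you derive $\left(1+\frac1w\right)F = C + D - z$ from the recurrence, use the Lagrange relation \eqref{eq:y} to collapse $F/z$ to $\frac{y}{1+y}$ (the paper's $1-H=\frac{1}{1+y}$), conclude that $\sum_{n,k}\frac1n a_{n,k}z^nw^k = \log(1+y)$, and finish with the identical Lagrange inversion extraction yielding ${3n-1 \choose n+k}{k-1 \choose n-1}$. The only differences are cosmetic --- you make explicit the boundary verification $f_{n,n-1}=d_{n,n-2}$ and the simplification the paper leaves as ``some substitution,'' and you phrase the paper's formal integration step as the equivalent coefficient identity $[z^n]\,zH'(z)=n\,[z^n]H(z)$.
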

\begin{proof}
Let $H = F/z$ and let $C$ be the generating function for $c_{n,k}$ as in the
previous section and let $C = z + zy$. From the recurrence
$$
f_{n,k} + f_{n,k+1} = d_{n,k} + c_{n,k}, \text{ \ } n \ge 2, \text{ \ }
$$
and
$$
f_{1,k} = 0
$$
we have
$$
\left(1 + \frac1w\right)F = D + C - z = z^2(1+y)^2 + zy.
$$
Therefore, after some substitution and simplification, applying
the identity in \eqref{eq:y}, we get
$$
1 - H = \frac{1}{1+y}.
$$
From
$$
\frac{A}{z} = \frac{\partial H/\partial z}{1-H}
$$
we get
$$
\int \frac{A}{z} dz = \int \frac{dH}{1-H}
$$
or equivalently
$$
\sum_{n,k} \frac1n a_{n,k}z^nw^k = -\log(1 - H) = \log(1+y).
$$
By Lagrange inversion,
\begin{eqnarray*}
\frac{1}{n} a_{n,k} &=& [z^nw^k] \int \frac{A}{z}\,dz \\
&=& [z^nw^k] \log(1+y) \\
&=& \frac{1}{n} [u^{n-1}w^k] \frac{w^n(1+u)^{3n}}{(1- uw)^n} \frac{1}{1+u} \\
&=& \frac{1}{n} [u^{n-1}w^{k-n}] \frac{(1+u)^{3n-1}}{(1-uw)^n} \\
&=& \frac{1}{n} (-1)^{k-n} {-n \choose k-n} [u^{n-1}] u^{k-n} (1+u)^{3n-1} \\
&=& \frac{1}{n} {k-1 \choose n-1} [u^{2n-k-1}] (1+u)^{3n-1} \\
&=& \frac{1}{n} {3n-1 \choose n+k}{k-1 \choose n-1}
\end{eqnarray*}
whence our desired result.
\end{proof}

Comparing with \eqref{eq:keven} shows that Theorem~\ref{t:main} holds when
$d = 2$ and $k$ is even.

\section{The case where $d \ge 3$}

Finally, in this section, we prove that Theorem~\ref{t:main} holds
when $d \ge 3$. For this section, define $n' = \frac nd$ and $k' = \frac kd$.
Again, it is a straightforward computation to verify that if $d | k$, then
$$
c(n,k;\omega) 
= {3n'-1 \choose n'+k'}{k'-1 \choose n'-1}.
$$
\begin{lemma}
If $d \ge 3$ does not divide $k$, then $c(n,k;\omega) = 0$, where
$\omega$ is a primitive $d$-th root of unity.
\end{lemma}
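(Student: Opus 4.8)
The plan is to evaluate the two Gaussian binomials in \eqref{eq:Cq} at $q=\omega$ using the $q$-Lucas theorem, which states that for a primitive $d$-th root of unity $\omega$ and nonnegative integers $m = m_1 d + m_0$, $r = r_1 d + r_0$ with $0 \le m_0, r_0 < d$, one has
$$
{m \brack r}_\omega = {m_1 \choose r_1}{m_0 \brack r_0}_\omega .
$$
Since $c(n,k;q)$ is a polynomial (Proposition~\ref{p:polynomial}), the value $c(n,k;\omega)$ is unambiguous, so it suffices to show that the product of the two Gaussian binomials on the right of \eqref{eq:Cq} vanishes at $\omega$ while the prefactor stays finite and nonzero. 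For the prefactor, because $d \mid n$ we have $n-1 \equiv -1 \pmod d$, so $\omega^{\,n-1} = \omega^{-1} \ne 1$ and hence $[n-1]_\omega = \frac{1-\omega^{\,n-1}}{1-\omega} \ne 0$; thus $1/[n-1]_\omega$ is a finite nonzero scalar and plays no role in the vanishing.

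Next I would read off the base-$d$ digits of the entries, using $d \mid n$ throughout. Writing $k \equiv k_0 \pmod d$ with $1 \le k_0 \le d-1$ (possible precisely because $d \nmid k$), the relevant residues are: for ${3n-3 \brack n+k}_\omega$, top $\equiv d-3$ and bottom $\equiv k_0$; for ${k-1 \brack n-2}_\omega$, top $\equiv k_0-1$ and bottom $\equiv d-2$. By $q$-Lucas each Gaussian binomial factors as an ordinary binomial times a digit Gaussian binomial ${a \brack b}_\omega$ with $0 \le a,b < d$, and this digit factor is nonzero exactly when $b \le a$, since for $0 \le a,b,a-b < d$ none of the factors of $[a]!_\omega$, $[b]!_\omega$, $[a-b]!_\omega$ vanish. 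Hence the digit factor of ${k-1 \brack n-2}_\omega$ is ${k_0-1 \brack d-2}_\omega$, which vanishes whenever $k_0-1 < d-2$, i.e. for $k_0 \in \{1,\ldots,d-2\}$; and the digit factor of ${3n-3 \brack n+k}_\omega$ is ${d-3 \brack k_0}_\omega$, which vanishes whenever $k_0 > d-3$, i.e. for $k_0 \in \{d-2,d-1\}$.

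These two ranges together cover every residue $k_0 \in \{1,\ldots,d-1\}$, so whenever $d \nmid k$ at least one of the two Gaussian binomials vanishes at $\omega$; their product is therefore $0$, and combined with $1/[n-1]_\omega \ne 0$ this gives $c(n,k;\omega)=0$. The boundary cases behave correctly: for $d=3$ one has $d-3=0$, forcing ${3n-3 \brack n+k}_\omega = 0$ for both admissible residues $k_0 \in \{1,2\}$. The only delicate point, and the main obstacle, is the residue $k_0 = d-1$: there the first binomial has digit factor ${d-2 \brack d-2}_\omega = 1 \ne 0$ and is of no help, so the vanishing must come entirely from the second binomial ${d-3 \brack d-1}_\omega$. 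The argument thus reduces to the careful residue bookkeeping confirming that the two vanishing ranges exhaust $\{1,\ldots,d-1\}$.
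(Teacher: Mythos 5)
Your proof is correct, and it reaches the same case split as the paper --- ${3n-3 \brack n+k}_q$ vanishes at $\omega$ when $k_0 \ge d-2$ and ${k-1 \brack n-2}_q$ vanishes when $k_0 \le d-2$, the two ranges covering all nonzero residues --- but by a genuinely different mechanism. The paper proves the vanishing by hand: it groups the $q$-integers in each Gaussian binomial into blocks of ratios that remain finite and nonzero as $q \to \omega$, and exhibits one leftover numerator factor (namely $[2n-k-d+r]_q$ in the first case, and $[k-r-n+d]_q$ or $[k-n+3]_q$ in the second) whose index is divisible by $d$, with a separately handled boundary subcase $r = d-3$ checking that the needed factor actually occurs in the product. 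Your appeal to the $q$-Lucas theorem packages exactly this bookkeeping: since $d \mid n$ the digit binomials are ${d-3 \brack k_0}_\omega$ and ${k_0-1 \brack d-2}_\omega$, and each vanishes precisely when its bottom digit exceeds its top digit. What the paper's route buys is self-containedness (no external theorem beyond the definition of $[m]_q$); what yours buys is brevity and immunity to boundary fussiness --- indeed the case $k_0 = d-1$ that you flag as the ``main obstacle'' is no harder than the others under $q$-Lucas, since ${d-3 \brack d-1}_\omega = 0$ outright, whereas the genuinely delicate boundary in the paper's hand computation is $r = d-3$. Two cosmetic blemishes, neither affecting correctness: in your last paragraph you swap the labels ``first''/``second'' binomial relative to your earlier usage, and you should note that a nonvanishing digit factor does not by itself make the full binomial nonzero (the ordinary factor ${m_1 \choose r_1}$ could vanish) --- harmless here, since your argument only ever needs one factor to be zero, and your treatment of the prefactor via $[n-1]_\omega \ne 0$, using $n-1 \equiv -1 \pmod d$, is sound.
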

\begin{proof}
Suppose $k \equiv r \pmod{d}$, where $0 < r < d$. If $r > d-3$, then
${3n-3 \brack n+k}_q = 0$. Grouping terms, we have
$$
{3n-3 \brack n+k}_q
= \frac{\overbrace{[3n-3]_q \cdots}^{d-3}}{\underbrace{[n+k]_q \cdots}_r}
\frac{\overbrace{[3n-d]_q \cdots [2n-k-d+r+1]_q}^{n+k-r}}{\underbrace{[n+k-r]_q \cdots
[1]_q}_{n+k-r}} \frac{\overbrace{[2n-k-d+r]_q \cdots [2n-k-2]_q}^{r-d+3}}{}.
$$
The center block of $n+k-r$ ratios as well as the $d-3$ ratios to the left of it go
to some number in the limit as
$q \to \omega$, and none of the terms to the left vanish at $q = \omega$
since none are divisible by $d$.
However, $2n-k-d+r \equiv 0 \pmod{d}$ so $[2n-k-d+r]_{q = \omega} = 0$.
Since $d-r \le 2$, one has $2n-k-d+r \ge 2n-k-2$ so the $[2n-k-d+r]_q$ term actually
exists in the expression above. Similarly, if $r \le d-3$, then ${k-1 \brack n-2}_q = 0$.
Grouping terms again, we have
$$
{k-1 \brack n-2}_q
= \frac{\overbrace{[k-1]_q \cdots}^{r-1}}{\underbrace{[n-2]_q \cdots}_{d-2}}
\frac{\overbrace{[k-r]_q \cdots [k-r-n+d+1]_q}^{n-d}}{\underbrace{[n-d]_q \cdots
[1]_q}_{n-d}}
\frac{\overbrace{[k-r-n+d]_q \cdots [k-n+2]_q}^{d-r-1}}{}.
$$
As in the previous case, the center block of $n-d$ ratios as well as the $r-1$
ratios to the left of it go to some number in the limit as $q \to \omega$, and none
of the terms to the left vanish at $q = \omega$ since none are divisible by $d$.
If $r < d-3$, then $d-2 \ge r$, so $k-r-n+d \ge k-n+2$, hence the
$[k-r-n+d]_q$ term exists in the expression above and since $k-r-n+d \equiv 0
\pmod{d}$, it has a zero at $q = \omega$. If $r = d-3$, then there are
exactly $d-r-1=2$ terms in the right block, one of which is $[k-n+3]_q$.
Since $k-n+3 \equiv r+3 \equiv 0 \pmod{d}$, this term has a zero at $q = \omega$.
\end{proof}

If $d$ does not divide $k$, then in fact there are no graphs with $k$
edges that are fixed under rotation by $\frac{2\pi}{d}$, since
each edge lies in a free orbit under the action of rotation. We
henceforth assume that~$d | k$.
\begin{lemma}
$$
s_d(n,k) = 
n' \cdot f_{n'+1,k'} + s_2(2n',2k').
$$
\end{lemma}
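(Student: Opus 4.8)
The plan is to classify the graphs fixed by $\rho$, the rotation by $\frac{2\pi}{d}$ (equivalently, the label shift $i \mapsto i + n'$), according to the structure of the face containing the center. As the preceding discussion records, every edge lies in a free orbit of size $d$ under $\rho$; in particular no edge passes through the center, since the distinct rotates of a diameter would be pairwise-crossing diameters. I would first show the center lies in a bounded face whose boundary is a genuine cycle $P$: were it in the outer face, a radial ray avoiding all edges together with its $d$ rotates would split the disk into $d$ sectors across which no edge passes, contradicting connectedness. Because the boundary of $P$ is $\rho$-invariant and built from free orbits, $P$ is a $dm$-gon for some $m \ge 1$; I take $P$ minimal so that no edge lies inside it. The argument then splits according to whether $m = 1$ or $m \ge 2$, and I claim these contribute $n'\cdot f_{n'+1,k'}$ and $s_2(2n',2k')$ respectively.

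For $m = 1$ the polygon $P$ is a $d$-gon whose vertex set is a single $\rho$-orbit $\{w, w+n', \ldots, w+(d-1)n'\}$; there are $n'$ choices of orbit, which supplies the factor $n'$. Once $P$ is fixed, $\rho$-symmetry recovers the whole graph from the part lying on the arc from $w$ to $w+n'$, and that part is a non-crossing connected graph on the $n'+1$ vertices $w, w+1, \ldots, w+n'$ that contains the edge $w\,(w+n')$ of $P$ and carries $k' = k/d$ edges, i.e. exactly an object counted by $f_{n'+1,k'}$. This yields $n'\cdot f_{n'+1,k'}$, in direct parallel with the diameter (odd $k$) computation $s_2(n,k) = n'\cdot f_{n'+1,k'}$ for $d = 2$.

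For $m \ge 2$ I would exhibit a count-preserving correspondence with the order-$2$ symmetric graphs enumerated by $s_2(2n',2k')$. The key point is that the vertices of $P$ constitute $m$ full $\rho$-orbits chosen among the $n'$ orbits $\{i, i+n', \ldots\}$, which sit cyclically exactly as the $n'$ antipodal classes of a $2n'$-gon do; between consecutive vertices of $P$ the graph carries an $f$-structure, with edge counts summing to $k'$ (the wrap-around gap being treated as in the $d=2$, even case). This fundamental-domain data---an $m$-subset of the $n'$ classes together with its $f$-fillings---is independent of $d$, and is precisely the data describing an order-$2$ symmetric non-crossing connected graph on $2n'$ vertices and $2k'$ edges whose central polygon is a $2m$-gon. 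Summing over $m \ge 2$ gives equal totals on the two sides; and since the boundary of a face is a closed walk and the two antipodal (non-diameter) edges of a putative $m=1$ configuration are disjoint, they cannot bound a face, so every even-edge order-$2$ symmetric graph already has $m \ge 2$. Hence the $m \ge 2$ total is all of $s_2(2n',2k')$.

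The main obstacle is making the $m \ge 2$ identification fully rigorous. The $f$-decomposition and the $m=1$ count are routine, but to conclude that the per-$m$ counts agree I must check that the rotational-position bookkeeping matches on both sides. This is exactly the overcounting handled in the $d=2$, even case (the factor $n + v_1 - v_m$), and I expect to reuse that argument after verifying that the cyclic orbit structure of the $n'$ classes is literally the same for rotation of order $d$ and of order $2$; the remaining care lies in the wrap-around gap and in confirming that no strip ($m=1$) configuration sneaks into $s_2(2n',2k')$.
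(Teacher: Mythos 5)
Correct, and essentially the paper's own approach: the same two-case split according to whether the edges form a central $d$-gon (giving $n' \cdot f_{n'+1,k'}$ over the $n'$ choices of orbit, exactly as in the paper) versus a central $dm$-gon with $m \ge 2$, which the paper handles by an explicit relabeling bijection onto centrally symmetric graphs on $2n'$ vertices with $2k'$ edges --- the same correspondence you realize by matching fundamental-domain data, and your parity observation that an even edge count forbids a diameter (so every graph counted by $s_2(2n',2k')$ has $m \ge 2$) is precisely the surjectivity check the paper leaves implicit in its ``straightforward to check'' remark. The ``main obstacle'' you flag is illusory: because you match the data $\left(\{v_1,\ldots,v_m\} \subseteq \{1,\ldots,n'\},\ k_1+\cdots+k_m = k'\right)$ between the two sides for each fixed $m$, the same normalization occurs on both sides, and no $n+v_1-v_m$-style overcounting correction is needed --- that factor arises only when one converts the decomposition into a generating-function identity, as in the $a_{n,k}$ lemma, not in the bijection itself.
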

\begin{proof}
If $\Gamma$ is a non-crossing connected graph on $\{1,\ldots,n\}$ fixed under
rotation by $\frac{2\pi}{d}$, then there are two cases: either the edges form a
central $d$-gon or not. In the former case, every edge is purely determined by
the edges on the first $n'+1$ vertices. In fact, there is bijection between
such graphs and non-crossing connected graphs on $n'+1$ vertices
with the edge from $1$ to $n'+1$. There are $f_{n'+1, k'}$
such graphs, and there are $n'$ possible $d$-gons. In the latter case,
the edges are determined by edges on the first $2n'$ vertices. We
construct a bijection between such graphs and centrally symmetric non-crossing
connected graphs on $2n'$ vertices with $2k'$ edges as follows
(see Figure~\ref{f:bijection}): 
\begin{figure}[htbp]
\begin{center}
\input{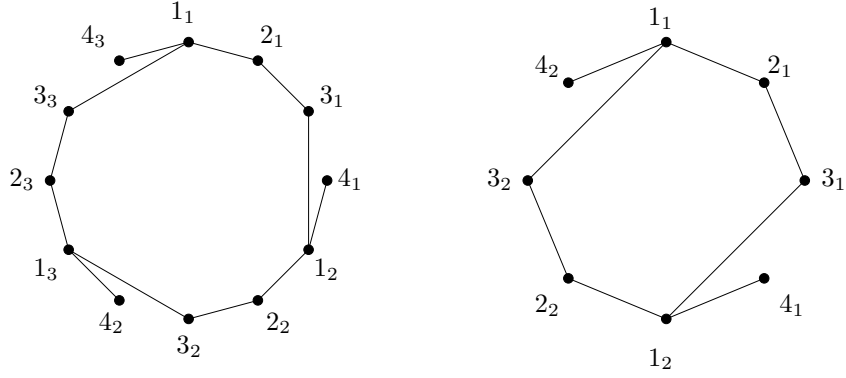}
\caption{The bijective construction when $(n,k,d) = (12,12,3)$}
\label{f:bijection}
\end{center}
\end{figure}
Going around clockwise in the graph,
label the first $m$ vertices $1_1,2_1,\ldots,n'_1$, label the next set of vertices
$1_2,2_2,\ldots,n'_2$, and so on. Construct a non-crossing graph with $2n'$
vertices labeled $1_1,2_1,\ldots,n'_1,1_2,2_2,\ldots,n'_2$.
For each edge from $i$ to $j$ in the original graph, we put an edge
with the same endpoints in the new graph. Finally, if there is an edge from
some $i_2$ to some $j_3$, we put an edge from $i_2$ to $j_1$ in the
new graph. This new graph therefore has $2k'$ edges.
It is straightforward to check that this is a bijection.
\end{proof}

\begin{prop}
For $d \ge 3$ and $\omega$ a primitive $d$th root of unity,
$$
c(n,k;\omega) = s_d(n,k).
$$
\end{prop}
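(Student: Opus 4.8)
The plan is to make this proposition essentially a bookkeeping step, assembling the structural lemma of this section together with the two closed forms established in the $d=2$ sections. I would first split on whether $d \mid k$. If $d \nmid k$, the preceding lemma gives $c(n,k;\omega) = 0$, while the remark immediately after it notes that no graph with $k$ edges can be fixed by rotation through $\frac{2\pi}{d}$ because every edge orbit is free, so $s_d(n,k) = 0$ as well and the two sides agree trivially. Hence I may assume $d \mid k$, and I keep the notation $n' = n/d$, $k' = k/d$ of this section.

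For $d \mid k$, the straightforward computation quoted at the start of the section gives
$$
c(n,k;\omega) = \binom{3n'-1}{n'+k'}\binom{k'-1}{n'-1},
$$
and the structural lemma of this section gives
$$
s_d(n,k) = n'\, f_{n'+1,k'} + s_2(2n',2k').
$$
The key observation is that the first summand is precisely the odd-argument value of $s_2$. Applying the lemma of the $d=2$, odd-$k$ section, namely $s_2(N,K) = \frac{N}{2}\, f_{\frac{N}{2}+1,\frac{K+1}{2}}$ for odd $K$, with $N = 2n'$ and $K = 2k'-1$, I obtain $s_2(2n',2k'-1) = n'\, f_{n'+1,k'}$. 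Substituting this back yields $s_d(n,k) = s_2(2n',2k'-1) + s_2(2n',2k')$, which is exactly the definition of $a_{n',k'}$ from the $d=2$, even-$k$ section. The proposition proved there, $a_{m,\ell} = \binom{3m-1}{m+\ell}\binom{\ell-1}{m-1}$, specialized to $m = n'$ and $\ell = k'$, then gives $s_d(n,k) = \binom{3n'-1}{n'+k'}\binom{k'-1}{n'-1} = c(n,k;\omega)$, which is the claim.

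The main obstacle is purely notational rather than computational: the symbols $n'$ and $k'$ mean different things across the three sections, so the one genuinely delicate point is confirming that the odd-$k$ lemma applies to the index pair $(2n',2k'-1)$ — that $2k'-1$ is odd (immediate) and that its halving index $\frac{(2k'-1)+1}{2}$ equals $k'$, matching the vertex index $n'+1$ with $\frac{2n'}{2}+1$. Once these translations are checked, no new identity is needed: the result falls out of combining the three already-established closed forms, and I would present it as the short chain of equalities above.
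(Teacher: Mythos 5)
Your proof is correct, and it rests on the same backbone as the paper's: both start from the structural lemma $s_d(n,k) = n'\, f_{n'+1,k'} + s_2(2n',2k')$ and finish by importing the $d=2$ results. The difference is in the final assembly. The paper evaluates the two summands separately in closed form --- the first via the odd-$k$ formula \eqref{eq:kodd} applied at $(2n',2k'-1)$, the second via the even-$k$ result $s_2(2n',2k') = c(2n',2k';-1)$ --- and then merges the two binomials with Pascal's identity, ${3n'-2 \choose n'+k'} + {3n'-2 \choose n'+k'-1} = {3n'-1 \choose n'+k'}$. You instead run the odd-$k$ lemma backwards to rewrite $n' f_{n'+1,k'}$ as $s_2(2n',2k'-1)$, recognize $s_2(2n',2k'-1) + s_2(2n',2k') = a_{n',k'}$ as the literal definition of $a$, and quote the Lagrange-inversion proposition $a_{n,k} = {3n-1 \choose n+k}{k-1 \choose n-1}$ directly. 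This buys you two things: you bypass Pascal's identity and the odd-case closed form entirely (only the odd-case lemma, a purely bijective statement, is needed), and you make explicit the clean intermediate identity $s_d(n,k) = a_{n/d,\,k/d}$, which the paper never states but which gives a combinatorial reason the answer is independent of $d$. Your index translations $(N,K) = (2n',2k'-1)$ check out, and your explicit treatment of the $d \nmid k$ case (both sides vanish, by the preceding lemma and the free-orbit remark) matches how the paper disposes of it just before the proposition.
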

\begin{proof}
By the previous Lemma, and results from the case where $d=2$,
\begin{eqnarray*}
s_d(n,k) &=& n' \cdot f_{n'+1, k'} + s_2\left( 2n', 2k' \right) \\
&=& {3n' - 2 \choose n'+k'}{ k' -1 \choose n' -1}
+ {3n' - 2 \choose n'+k'-1}{k' - 1 \choose n' - 1} \\
&=& \left[{ 3n' - 2 \choose n'+k'} +
{3n' - 2 \choose n'+k'-1} \right] {k' - 1 \choose n' - 1} \\
&=& {3n' - 1 \choose n'+k'}{ k' - 1 \choose n' - 1} \\
&=& c(n,k;\omega).
\end{eqnarray*}
\end{proof}

This completes the proof of Theorem~\ref{t:main}.

\section{Remarks and Future Work}

Recall in the definition of the cyclic sieving phenomenon given in the Introduction
that the function $X(q)$ must be a polynomial with nonnegative integer coefficients.
Looking at \eqref{eq:Cq}, it is not \emph{a priori} obvious that $c(n,k;q)$
is a polynomial with nonnegative integer coefficients. This is the content of
our next Proposition.
\begin{prop}\label{p:polynomial}
$c(n,k;q) \in \NN[q]$.
\end{prop}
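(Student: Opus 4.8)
The plan is to prove the two assertions separately: first that $c(n,k;q)$ is a genuine polynomial, and then that its coefficients are nonnegative. For polynomiality I would argue via orders of vanishing at roots of unity. Since each $q$-binomial coefficient already lies in $\NN[q]$, the product $P(q) := {3n-3 \brack n+k}_q {k-1 \brack n-2}_q$ is a polynomial, and the only issue is the division by $[n-1]_q$. Writing $[m]_q = \prod_{e \mid m,\, e \ge 2}\Phi_e(q)$ in terms of cyclotomic polynomials, it suffices to show that for every $d \ge 2$ dividing $n-1$ the product $P(q)$ vanishes at a primitive $d$-th root of unity $\omega$; the distinct $\Phi_d$ are coprime, so this yields $[n-1]_q \mid P(q)$ in $\QQ[q]$. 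Since the constant terms of $P$ and of $[n-1]_q$ are both $1$, the quotient is an honest polynomial with nonzero constant term.

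To carry out the count I would use $\mathrm{ord}_\omega [m]!_q = \lfloor m/d\rfloor$, whence $\mathrm{ord}_\omega {a \brack b}_q = \lfloor a/d\rfloor - \lfloor b/d\rfloor - \lfloor (a-b)/d\rfloor \in \{0,1\}$ (the zeros of a $q$-binomial at a root of unity are simple). Writing $n-1 = dm$, a direct computation gives
\[
\mathrm{ord}_\omega {3n-3 \brack n+k}_q = \lceil (k+1)/d\rceil - \lfloor(k+1)/d\rfloor, \qquad \mathrm{ord}_\omega {k-1 \brack n-2}_q = \lfloor(k-1)/d\rfloor - \lfloor k/d\rfloor + 1,
\]
so the first is $1$ exactly when $d \nmid (k+1)$ and the second is $1$ exactly when $d \nmid k$. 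Their sum vanishes only if $d$ divides both $k$ and $k+1$, which is impossible for $d \ge 2$; hence $\mathrm{ord}_\omega P \ge 1$ for every such $d$, and $c(n,k;q)$ is a polynomial.

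The harder part --- and the main obstacle --- is nonnegativity, which does \emph{not} follow from polynomiality (dividing an $\NN[q]$ polynomial by $[m]_q$ can create negative coefficients even when the quotient is a polynomial) nor from Theorem~\ref{t:main} (the cyclic sieving phenomenon controls only sums of coefficients in residue classes mod the order, not individual coefficients). My approach would be to fold the $q$-integer denominator into the second factor: using $[n-1]_q[n-2]!_q = [n-1]!_q$ one gets
\[
c(n,k;q) = {3n-3 \brack n+k}_q \cdot \frac{1}{[k]_q}{k \brack n-1}_q,
\]
where the second factor is the rational $q$-Catalan number $\mathrm{Cat}_q(n-1,\,k-n+1) = \frac{1}{[k]_q}{k \brack n-1}_q$. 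When $\gcd(n-1,k)=1$ this factor is itself in $\NN[q]$ --- it is the generating function of $(n-1,\,k-n+1)$-Dyck paths by area --- so $c(n,k;q)$ is a product of two $\NN[q]$ polynomials, hence in $\NN[q]$.

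The remaining, genuinely delicate case is $\gcd(n-1,k)=g>1$: there the rational $q$-Catalan factor is not even a polynomial, the missing cyclotomic factors must be supplied by ${3n-3 \brack n+k}_q$, and positivity becomes a property of the product that cannot be read off factorwise. To resolve this I would look for a statistic $\mathrm{stat}$ on the set $X$ of non-crossing connected graphs with $c(n,k;q) = \sum_{x \in X} q^{\mathrm{stat}(x)}$ --- the natural outcome in a cyclic-sieving setting and the cleanest source of manifest positivity --- or, failing a direct combinatorial model, a positive recurrence in the spirit of the Carlitz $q$-Catalan recurrence, $q$-deforming the integer relation $f_{n,k}+f_{n,k+1}=c_{n,k}+d_{n,k}$ and inducting from the extreme columns $k=n-1$ and $k=2n-3$, both of which are $q$-Catalan numbers already known to lie in $\NN[q]$. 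I expect constructing and verifying this statistic (equivalently, establishing positivity of the deformed recurrence in the non-coprime case) to be the crux of the argument.
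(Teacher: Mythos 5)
Your first half (polynomiality) is correct, and it is essentially the paper's own strategy---comparing orders of vanishing at roots of unity---packaged more cleanly through the cyclotomic factorization $[n-1]_q = \prod_{d \mid n-1,\, d \ge 2} \Phi_d(q)$. Your floor-function computations check out: for $d \mid n-1$ the two orders are $1$ exactly when $d \nmid (k+1)$, respectively $d \nmid k$, and no $d \ge 2$ divides both $k$ and $k+1$. If anything, your write-up of this step is tidier than the paper's case analysis.

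The genuine gap is nonnegativity, and it is twofold. First, in the coprime case $\gcd(n-1,k)=1$ your justification is factually wrong: $\frac{1}{[k]_q}{k \brack n-1}_q$ is \emph{not} the area generating function of $(n-1,\,k-n+1)$-Dyck paths. Already for $(a,b)=(2,3)$, i.e.\ $n=3$, $k=5$, the area generating function is $1+q$ while $\frac{1}{[5]_q}{5 \brack 2}_q = 1+q^2$; positivity of the rational $q$-Catalan number is a true but nontrivial fact tied to maj/skew-length-type statistics, not to area, so even this case rests on a misattributed black box. Second, and decisively, in the non-coprime case you offer only a research program (find a statistic, or $q$-deform the recurrence $f_{n,k}+f_{n,k+1}=c_{n,k}+d_{n,k}$) and explicitly defer the crux, so the Proposition is not proved. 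The paper closes nonnegativity in all cases at once, with no coprimality dichotomy: each $q$-binomial lies in $\NN[q]$ with a symmetric, unimodal coefficient sequence; the product ${3n-3 \brack n+k}_q {k-1 \brack n-2}_q$ is therefore again symmetric and unimodal; and \cite[Proposition~10.1]{RSW} states precisely that dividing such a polynomial by $[m]_q$ (here $m = n-1$, divisibility being your polynomiality step) yields a quotient in $\NN[q]$. Unimodality is exactly the hypothesis missing from your own (correct) cautionary example $(1+q^3)/(1+q) = 1-q+q^2$, where the numerator is symmetric but not unimodal. Note finally that this same lemma applied to ${k \brack n-1}_q/[k]_q$ would have repaired your coprime case too---${k \brack n-1}_q$ is symmetric and unimodal, and $[k]_q$ divides it when $\gcd(n-1,k)=1$---so the single idea you are missing (symmetry plus unimodality survives division by a $q$-integer) subsumes your entire dichotomy.
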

\begin{proof}
To show that $c(n,k;q) \in \QQ[q]$, it suffices to show that in the expansion
$$
c(n,k;q) = \frac{\overbrace{[3n-3]_q \cdots [2n-k-2]_q}^A}
{\underbrace{[n+k]_q \cdots [1]_q}_B}
\frac{\overbrace{[k-1]_q \cdots [k-n+2]_q}^C}{\underbrace{[n-1]_q \cdots [2]_q}_D},
$$
for each $j \ge 1$, if $\omega$ is a primitive $j$-th root of unity, then
the order of the zero at $q = \omega$ is no smaller in the numerator than in the
denominator. For $j = 1$, it is clear that the numerator and denominator each
have a zero at $q = 1$ with multiplicity $2n+k-2$. For $j = 2$, we have
two cases. If $n$ is even, then block $B$ has $\lfloor\frac{n+k}{2}\rfloor$
zeros at $q = -1$ and block $D$ has $\frac{n-2}{2}$ zeros, whereas
block $A$ has at least $\lfloor\frac{n+k}{2}\rfloor$ zeros at $q = -1$
and block $C$ has at least $\frac{n-2}{2}$ zeros. If $n$ is even, we have two
subcases. If $k$ is odd, then the bottom has $\lfloor\frac{n+k}{2}\rfloor
+ \frac{n-1}{2}$ zeros while the the top has $\lceil\frac{n+k}{2}\rceil
+ \lfloor\frac{n-2}{2}\rfloor$, so they have the same order zero at $q = -1$.
If $k$ is even, then both the top and bottom have a zero at $q=-1$ of order
$\frac{n+k}{2} + \frac{n-1}{2}$. Now, if $j \ge 3$, we again have
two cases. If $n-1 \not\equiv 0 \pmod{j}$, then the bottom has a zero at
$q = \omega$ of order $\lfloor\frac{n+k}{j}\rfloor + \lfloor\frac{n-1}{j}\rfloor$
while block $A$ has a zero of order $\ge \lfloor\frac{n+k}{j}\rfloor$ and
block $C$ has a zero of order $\ge \lfloor\frac{n-1}{j}\rfloor$. If $n=1 \equiv 0
\pmod{j}$, then the bottom has a zero of order $\lfloor\frac{n+k}{j}\rfloor
+ \frac{n-1}{j}$ while, since $3n-3 \equiv 0 \pmod{j}$, the top has a zero of order
$\ge \lceil\frac{n+k}{j}\rceil + \lfloor\frac{n-2}{j}\rfloor$ which is
equal to $\lfloor\frac{n+k}{j}\rfloor + \lfloor\frac{n-2}{j}\rfloor + 1 =
\lfloor\frac{n+k}{j}\rfloor + \frac{n-1}{j}$.
It is well-known that $q$-binomial coefficients are polynomials in $\NN[q]$ with
symmetric unimodal coefficient sequences since, for example, ${n \brack k}_q$
is the generating function for integer partitions that fit in a $k \times (n-k)$ rectangle.
Therefore,
$$
{3n-3 \brack n+k}_q {k-1 \brack n-2}_q
$$
is a product of polynomials in $\NN[q]$ with symmetric unimodal coefficient sequences
and is thus itself a polynomial in $\NN[q]$ with a symmetric unimodal coefficient sequence.
The fact that $c(n,k;q) \in \NN[q]$ then follows from \cite[Proposition~10.1]{RSW}.
\end{proof}

\subsection{Cyclic sieving phenomenon in other types of graphs}
In \cite{flajoletnoy}, one finds various formulas for counting classes of
non-crossing graphs, of which \eqref{eq:C} is one. Consider the following
four formulae, found in \cite{flajoletnoy}:
\begin{eqnarray*}
T_n &=& \frac{1}{2n-1} {3n-3 \choose n-1} \\
C_{n,k} &=& \frac{1}{n-1} {3n-3 \choose n+k}{k-1 \choose k-n+1} \\
D_{n,k} &=& \frac{1}{k+1} {n-3 \choose k}{n+k-1 \choose k} \\
P_{n,k} &=& \frac{1}{n} {n \choose k}{n \choose k+1}
\end{eqnarray*}
where $T_n$ is the number of non-crossing trees on $n$ vertices, $C_{n,k}$
is the number of non-crossing connected graphs on $n$ vertices,
$D_{n,k}$ is the number of dissections of a convex $n$-gon using $k$
non-crossing diagonals, and $P_{n,k}$ is the number of non-crossing
partitions of size $n$ with $n-k$ blocks (for a definition of a non-crossing
partition, see \cite[Section~3]{flajoletnoy} or \cite[Section~7.2]{RSW}).
These equations all have natural $q$-analogues:
\begin{eqnarray*}
T(n;q) &=& \frac{1}{[2n-1]_q} {3n-3 \brack n-1}_q \\
C(n,k;q) &=& \frac{1}{[n-1]_q}{3n-3 \brack n+k}_q {k-1 \brack k-n+1}_q \\
D(n,k;q) &=& \frac{1}{[k+1]_q}{n-3 \brack k}_q {n+k-1 \brack k}_q \\
P(n,k;q) &=& \frac{1}{[n]_q} {n \brack k}_q {n \brack k+1}_q q^{k(k+1)}.
\end{eqnarray*}
Let $C$ be the cyclic group of order $n$ with a group action of rotation
on a non-crossing graph on $n$ vertices. As mentioned in the Introduction,
the collections of graphs for $T_n$, $D_{n,k}$, and $P_{n,k}$ exhibit
the sieving phenomenon with respect to their respective $q$-analogues
and $C$. Theorem~\ref{t:main} asserts that this is also true for
those graphs counted by $C_{n,k}$ and $X(q) = C(n,k;q)$. However, there
are more formulas in \cite{flajoletnoy}:
\begin{eqnarray*}
F_{n,k} &=& \frac{1}{2n-k} {n \choose k-1}{3n-2k-1 \choose n-k} \\
G_{n,k} &=& \frac{1}{n-1} \sum_{j=0}^{n-2} {n-1 \choose k-j}{n-1 \choose j+1}
{n-2+j \choose n-2}
\end{eqnarray*}
where $F_{n,k}$ is the number of non-crossing forests on $n$ vertices with $k$
components and $G_{n,k}$ is the number of non-crossing (not necessarily
connected) graphs on $n$ vertices with $k$ edges. These formulae admit
$q$-analogues as well:

\begin{conj}
Let $X$ be the set of non-crossing forests on $n$ vertices with $k$ components.
Let
$$
X(q) = \frac{1}{[2n-k]_q} {n \brack k-1}_q {3n-2k-1 \brack n-k}_q
$$
and let $C$ be the cyclic group of order $n$ acting on $X$ by rotation. Then
$(X,X(q),C)$ exhibits the cyclic sieving phenomenon.
\end{conj}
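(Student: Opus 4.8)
The plan is to prove the conjecture by reproducing, for non-crossing forests, the program carried out for Theorem~\ref{t:main}: after establishing that $X(q)\in\NN[q]$, I would fix a divisor $d$ of $n$ and a primitive $d$-th root of unity $\omega$, compute $X(\omega)$ in closed form, enumerate the forests fixed under rotation by $2\pi/d$, and verify the two counts coincide. The polynomiality would be handled exactly as in Proposition~\ref{p:polynomial}: expand $X(q)$ as a quotient of products of $q$-integers, and for each $j\ge1$ check that the order of vanishing at a primitive $j$-th root of unity is no smaller in the numerator than in the denominator; since ${n \brack k-1}_q{3n-2k-1 \brack n-k}_q$ is a product of $q$-binomials, hence symmetric unimodal with nonnegative coefficients, the conclusion $X(q)\in\NN[q]$ follows from \cite[Proposition~10.1]{RSW}.

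The organizing principle on the fixed-point side is acyclicity. A $k$-component forest on $n$ vertices has $n-k$ edges, and a forest fixed by rotation cannot contain a cycle through the center: for $d\ge3$ this rules out a central $d$-gon and, since two diameters would cross, it rules out diameters altogether, so every edge lies in a free orbit of size $d$ and fixed forests exist only when $d\mid(n-k)$ (equivalently $d\mid k$); for $d=2$ the non-diameter edges occur in antipodal pairs and at most one diameter is allowed, so a diameter is present precisely when $n-k$ is odd. This forces the case split to be governed by the parity and divisibility of the edge count $n-k$, in analogy with the role played by $k$ in Sections 3--5. Writing $n'=n/d$, I would reduce each case to a fundamental-domain enumeration: for $d\ge3$ the fixed forest is $d$ rotated copies of a non-crossing forest carried by one sector of $n'+1$ vertices, and for $d=2$ the two parity subcases are anchored by the unique diameter (odd $n-k$) or by the innermost antipodal edges (even $n-k$). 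These fundamental counts would be produced from the bivariate generating function for non-crossing forests of \cite{flajoletnoy}, put into Lagrange form --- presumably a relative of the series $y$ of \eqref{eq:y} --- and extracted by Lagrange inversion, just as $c_{n,k}$, $d_{n,k}$, and $a_{n,k}$ were obtained.

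On the evaluation side I would apply the $q$-Lucas theorem to each $q$-binomial factor of $X(q)$ together with the elementary behaviour of $1/[2n-k]_q$, noting that, because $d\mid n$, the factor $[2n-k]_q$ vanishes at $\omega$ exactly when $d\mid k$. For $d\ge3$ this analysis should yield $X(\omega)=0$ precisely when $d\nmid k$, matching the absence of fixed forests in that range, and a product of ordinary binomial coefficients when $d\mid k$, to be compared with the fundamental-domain count; the vanishing statement is the forest analogue of the lemma giving $c(n,k;\omega)=0$ when $d\ge3$ does not divide $k$. For $d=2$ I would instead compute $X(-1)$ directly in each parity subcase, as in \eqref{eq:kodd}, obtaining a product of binomials to match against the diameter and antipodal-pair counts. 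Base cases can be anchored at $k=1$, where $X$ is the set of non-crossing trees and the cyclic sieving phenomenon is already known.

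I expect the main obstacle to be the combinatorics of symmetric forests in the cases without a central anchor --- most sharply $d=2$ with $n-k$ even, and more generally the way components that straddle sector boundaries are copied and must remain non-crossing. Unlike the connected setting, there is no central polygon to organize the recursion, so the reduction to a fundamental domain requires a genuinely new decomposition and, with it, the analogue of the generating-function identity $A/z=\bigl(\partial(F/z)/\partial z\bigr)/(1-F/z)$ relating the fixed-point series to the forest series. Deriving that identity and carrying out the attendant Lagrange-inversion extraction to match $X(\omega)$ is where the real difficulty lies; by contrast the polynomiality and the $q$-Lucas evaluations should be comparatively routine.
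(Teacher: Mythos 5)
The statement you are addressing is not a theorem of the paper but an open conjecture stated in the ``Remarks and Future Work'' section; the paper contains no proof of it, so there is nothing to match your attempt against. More importantly, your submission is a research plan rather than a proof, and by your own admission it leaves the decisive step unresolved. The parts you carry out in detail are the easy perimeter: polynomiality of $X(q)$ by the argument of Proposition~\ref{p:polynomial} (this does go through, since $X(q)$ is again a ratio of products of $q$-integers and the product of the two $q$-binomials is symmetric and unimodal, so \cite[Proposition~10.1]{RSW} applies), and the sound structural observations that acyclicity forbids a central cycle, that for $d\ge 3$ every edge and indeed every component lies in a free orbit of size $d$ (a tree admits no automorphism of order $\ge 3$ without a fixed vertex or edge), so fixed forests require $d\mid k$, and that for $d=2$ a diameter occurs exactly when $n-k$ is odd. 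These are correct but they only set up the case analysis; they do not compute $s_d$ for forests, nor do they establish the claimed vanishing $X(\omega)=0$ for $d\nmid k$, which you assert ``should'' follow from $q$-Lucas without exhibiting the calculation.

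The genuine gap is precisely the one you flag: without a central polygon there is no analogue of the paper's anchor. In the connected case, the lemma $s_d(n,k)=n'\cdot f_{n'+1,k'}+s_2(2n',2k')$ and the identity $A/z=\bigl(\partial(F/z)/\partial z\bigr)/\bigl(1-F/z\bigr)$ both rest on locating the innermost $2m$-gon around the center, and every Lagrange-inversion extraction downstream depends on that decomposition. For a symmetric forest the face of the configuration containing the center is bounded by edges from several distinct components straddling sector boundaries, and you offer no replacement decomposition, no fixed-point generating function, and no closed-form fundamental-domain count to compare with the root-of-unity evaluations of $X(q)$. Until that combinatorial core is supplied and the resulting coefficient extraction is shown to match $X(\omega)$ in each case ($d=2$ with $n-k$ odd, $d=2$ with $n-k$ even, $d\ge 3$ with $d\mid k$), the conjecture remains exactly as open as the paper leaves it; what you have written is a plausible roadmap in the spirit of Sections 3--5, not a proof.
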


\begin{conj}
Let $X$ be the set of non-crossing graphs on $n$ vertices with $k$ edges.
Let
$$
X(q) = \frac{1}{[n-1]_q} \sum_{j=0}^{n-2} {n-1 \brack k-j}_q {n-1 \brack j+1}_q
{n-2+j \brack n-2}_q q^{j(j+n-k+2)}
$$
and let $C$ be the cyclic group of order $n$ acting on $X$ by rotation. Then
$(X,X(q),C)$ exhibits the cyclic sieving phenomenon.
\end{conj}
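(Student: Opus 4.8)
The plan is to follow the same three-part strategy that proves Theorem~\ref{t:main}: establish that $X(q)$ is a polynomial with $X(1)=|X|=G_{n,k}$, compute the fixed-point counts $g_d(n,k):=\#\{x\in X : \rho^{n/d}(x)=x\}$ for each divisor $d\mid n$ (here $\rho$ denotes rotation by $\frac{2\pi}{n}$), and finally match $X(\omega)=g_d(n,k)$ for $\omega$ a primitive $d$-th root of unity. The case $d=1$ is just $X(1)=G_{n,k}$, which I would verify by a Vandermonde-type simplification of the defining sum. As in the connected case, I expect the remaining work to split according to $d=2$ with $k$ odd, $d=2$ with $k$ even, and $d\ge 3$.

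First I would set up the generating-function machinery. Because a non-crossing graph is precisely a non-crossing partition of $\{1,\dots,n\}$ together with a connected non-crossing graph on each block, the bivariate generating function $G(z,w)=\sum G_{n,k}z^nw^k$ should satisfy a functional equation obtained from this block decomposition in terms of the connected generating function $C(z,w)$. Writing $C=z+zy$ with $y=z\frac{w(1+y)^3}{1-wy}$ as in \eqref{eq:y}, I expect $G$ to be expressible through $y$, after which Lagrange inversion extracts $G_{n,k}$ and confirms the closed form $G_{n,k}=\frac{1}{n-1}\sum_{j}\binom{n-1}{k-j}\binom{n-1}{j+1}\binom{n-2+j}{n-2}$. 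This also gives a handle on polynomiality: I would show $X(q)\in\NN[q]$ by treating each summand as a product of $q$-binomials times a power of $q$, dividing by $[n-1]_q$, and arguing as in Proposition~\ref{p:polynomial} by comparing the order of vanishing at each cyclotomic factor in numerator and denominator; nonnegativity should follow either from a combinatorial reading of the sum or, failing that, from the positivity criterion \cite[Proposition~10.1]{RSW}.

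The combinatorial core is the fixed-point count $g_d(n,k)$. For a non-crossing graph invariant under rotation of order $d$, the key structural fact, exactly as in the lemmas of the connected case, is that the center of the $n$-gon either lies inside a rotation-invariant central polygon formed by edges of the graph or it does not, and that everything outside the central region consists of $d$ rotated copies of a fundamental-domain graph. Unlike the connected case, the fundamental domain may itself be disconnected and there may be no edges near the center, so the reduction will produce a sum over possible central configurations rather than the clean two-term answer $n'f_{n'+1,k'}+s_2(2n',2k')$. I would introduce the appropriate analogues of $f_{n,k}$ (graphs with a distinguished boundary edge) and of the centrally symmetric counts, derive their generating functions from $G$ exactly as $F$ and $A$ were derived from $C$, and assemble $g_d(n,k)$ from them.

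The hard part will be evaluating the \emph{sum} defining $X(\omega)$ at a primitive $d$-th root of unity and matching it to $g_d(n,k)$. Because $X(q)$ is a sum of products rather than a single product, the cyclotomic evaluation used for $c(n,k;q)$ does not apply termwise in an obviously telescoping way: one must apply a $q$-Lucas congruence to each $q$-binomial, track how the monomial factor $q^{j(j+n-k+2)}$ specializes at $\omega$, determine exactly which indices $j$ contribute a nonzero limit, and then recognize the surviving subsum as the fixed-point formula. I expect most values of $j$ to drop out, paralleling the vanishing phenomenon in the $d\ge 3$ lemma of Section~5, leaving a short sum that collapses via Pascal's and Vandermonde's identities to the closed form for $g_d(n,k)$ found above. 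Making this surviving-term analysis uniform across the three cases, and correctly handling the extra $q$-power at the root of unity, is where I anticipate the real difficulty.
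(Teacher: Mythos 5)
The statement you are addressing is not proved anywhere in the paper: it is stated explicitly as a conjecture in the closing section, so there is no proof of record to match, and the standard your submission must meet is that of a complete argument. What you have written is a research plan, not a proof, and by your own admission the decisive steps are left open. Concretely: (1) the evaluation of $X(\omega)$ is never performed. In the connected case the paper's Section~5 lemma works because $c(n,k;q)$ is a single ratio of products, so one can track orders of vanishing factor by factor; your $X(q)$ is a \emph{sum} of products with a $j$-dependent monomial $q^{j(j+n-k+2)}$, and at a primitive $d$-th root of unity the individual summands generically take nonzero complex values whose cancellation pattern you have not analyzed. Saying you ``expect most values of $j$ to drop out'' identifies the difficulty without resolving it; nothing in the paper's technique transfers termwise. (2) The fixed-point count $g_d(n,k)$ is likewise not computed. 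For disconnected invariant graphs the central-polygon dichotomy you borrow from Section~5 breaks down: the center may lie in a face bounded by edges of several distinct components, components not containing the center come in free $d$-orbits, and the ``no edges near the center'' case admits many rotation-classes of configurations. You acknowledge this but produce neither a decomposition lemma nor a closed form, so there is nothing on the combinatorial side to match against $X(\omega)$. (3) Even the base case $d=1$, i.e.\ $X(1)=G_{n,k}$, is asserted rather than verified.

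A further specific flaw: your polynomiality argument does not go through as described. The paper's Proposition~\ref{p:polynomial} relies on the fact that a \emph{product} of $q$-binomials has a symmetric unimodal coefficient sequence, which is the hypothesis needed to invoke \cite[Proposition~10.1]{RSW} after dividing by $[n-1]_q$. Your numerator is a sum of products shifted by different powers $q^{j(j+n-k+2)}$; a sum of symmetric unimodal polynomials with different centers of symmetry is in general neither symmetric nor unimodal, so the cited criterion does not apply without a new argument (and the cyclotomic order-of-vanishing comparison must now be done on the whole sum, where vanishing orders of individual terms can differ). In short, your proposal correctly identifies the shape a proof might take --- mirroring the paper's three-case strategy and its generating-function/Lagrange-inversion toolkit, with $G=$ (non-crossing partitions composed with $C$) as the starting functional equation --- but every step that distinguishes a proof from the conjecture itself remains open, which is precisely why the paper leaves this as a conjecture.
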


\subsection{Unifying algebraic proof of cyclic sieving phenomenon in graphs}
So far, separate combinatorial proofs have been offered in \cite{EuFu}, \cite{RSW}, and
this paper for the exhibition of the cyclic sieving phenomenon in the context
of the aforementioned classes of non-crossing graphs. It would be interesting
to see an algebraic proof of Theorem~\ref{t:main} along the lines of~
\cite[Proposition~2.1]{RSW}.

\section*{Acknowledgements}
This research was carried out in a summer REU at the University of Minnesota,
mentored by Vic Reiner and Dennis Stanton, and financially supported by
NSF grant DMS-1001933.
The author thanks Vic Reiner and Dennis Stanton for introducing him to
this fascinating problem, as well as for their guidance, support, and countless insightful
comments and suggestions. The author also thanks Jia Huang for carefully reading drafts
of this paper and for his helpful suggestions.


\end{document}